
\documentclass[preprint,12pt,number]{elsarticle}




\usepackage{amssymb}
\usepackage{amsthm}
\usepackage{amsmath}
\usepackage{subfig}
\usepackage{hyperref}
\usepackage{cleveref}

\newtheorem{definition}{Definition}
\newtheorem{theorem}{Theorem}
\newtheorem{lemma}{Lemma}
\newtheorem{corollary}{Corollary}

\newtheorem{proposition}{Proposition}

\newcommand{\R}{\mathbb{R}}
\newcommand{\tr}{\tilde{r}}
\newcommand{\Z}{\mathcal{Z}}

\journal{   }

\begin{document}

\begin{frontmatter}


 \tnotetext[label1]{The authors gratefully acknowledge the ARC Discovery grant DP180101602 and many valuable discussions with members of the research team Y.Nazarathy, T.Taimre, H.Jansen and S.Streipert of that project.}
 
  \tnotetext[label2]{Key Words: Threshold Risk, Tail Probabilities, Polynomial Perturbations, Roots of Polynomials, Discriminant and Puiseux Series. }
 

\title{Hidden  Equations of Threshold Risk}


\author[label1]{Vladimir V. Ejov}
\author[label2]{Jerzy A. Filar}

\address[label1]{College of Science and Engineering, Flinders University, South Australia, Australia}
\address[label2]{Centre for Applications in Natural Resource Mathematics, School of 
Mathematics and Physics,The University Of Queensland, Queensland, Australia}

\author[label2]{Zhihao Qiao}

\begin{abstract}
We consider the problem of sensitivity of threshold risk, defined as the probability of a function of a random variable falling below a specified threshold level $\delta >0.$  We demonstrate that for polynomial and rational functions of that random variable there exist at most finitely many risk critical points.  The latter are those special values of the threshold parameter for which rate of change of risk is unbounded as  $\delta$ approaches these threshold values. We characterize candidates for risk critical points as zeroes of either the resultant of a relevant  $\delta-$perturbed polynomial, or of its leading coefficient, or both. Thus the equations that need to be solved are themselves polynomial equations in  $\delta$ that exploit the algebraic properties of the underlying polynomial or rational functions.  We name these important equations as "hidden equations of threshold risk".   

\end{abstract}






\end{frontmatter}


\section{Introduction and Motivation}

This paper is motivated by the dual notions of ``tipping points" and ``risk sensitivity" frequently arising in society's interactions with the natural environment.  On some level the problem is at least as old as the history of agriculture with farmers being concerned about rainfall falling below some acceptable level,  or onset of frost; a prototypical tipping point for successful cultivation of certain crops. 

More recently, concerns about adverse climate change induced global warming have focused on the level of such warming exceeding thresholds such as 1.5 or 2.0 degrees C, by the year 2030 or 2050.  Similarly, in the area of sustainable management of fisheries, regulators often consider a fishery secure if the biomass of the harvested species does not fall below a certain percentage (e.g., $60\%$) of the virgin biomass.   
From the perspective of mathematical modelling of these concerns we first recognise two essential features:
\begin{itemize}
    \item often the variable that we are most interested in (e.g. harvest yield, or fish stock) depends essentially on at least one random variable;
    \item the tipping point is, perhaps, most naturally represented as a ``special value" of some parameter. In particular, a value such that if the variable of interest falls below (or above) that value, this is considered to be a ``high risk" situation.   
\end{itemize}
The use of quotation marks in that last point suggests that there is a need to make these phrases precise so as to be able to analyse them rigorously. In particular, there are already several alternative mathematical formulations of risk often stemming from actuarial science, finance or engineering. However, in this paper, we take the position that the simple threshold risk is both appropriate and already challenging in the context of the management of natural resources such as fisheries. Conceptually, this risk is modelled as a tail probability$$P(h(\text{random variable}) < \delta), $$
where $\delta$ is the threshold parameter, and $h(\cdot)$ is a given function.

At first sight, this formulation of risk may appear to correspond to a problem fully solved by mathematical statisticians and probabilists. In particular, it is a problem extensively studied in the context of extreme value theory (e.g., see  \cite{embrechts_modelling_1997}), financial mathematics (e.g., see \cite{GOURIEROUX2000225}, \cite{kluppelberg_ruin_1998}) and large deviation theory (e.g., see \cite{varadhan_large_1984}). These approaches focus primarily on asymptotic properties of tail probabilities of certain classes of distributions. However, our approach is essentially different in the sense that we explore the parametric sensitivity of the threshold risk induced by the algebraic form of the function of the random variable that is of interest.   

Indeed, recent applied studies such as \cite{ben-tovim_hospitals_2018} and \cite{filar2020}, indicate that the threshold risk may exhibit high sensitivity to the choice of model parameters, including the threshold parameter. The latter arose in two quite disparate  contexts of hospital management and fishery population models. This leads us to a more formal definition and analysis of threshold risk and critical values of the threshold which are natural candidates for tipping points. This is taken up in the next section.    

\section{Risk sensitivity of threshold probability with polynomials of random variables }
The most general one dimensional problem we will consider here is one where the random variable that is of main interest to us is actually a known rational function $h(X)$ of another random variable $X$ whose cumulative distribution function(cdf) $F(x)$ is also assumed to be known. In this paper we assume that $X$ is an absolutely continuous random variable, hence the density function $f(x)$ is well-defined (see also Remark 2 in Section 3). 
We begin the analysis with a simpler case where $h(X)=p(X),$ a known polynomial in $X.$
\begin{definition}[Risk with one polynomial function]
Let $X$ be a random variable and $\delta \in \R$ and consider a polynomial $p(X) = p_0+p_1X+p_2X^2+\cdots+p_n X^n.$ The {\em threshold risk} probability is
\begin{equation}\label{eq:2.1}
    R(\delta) = P \Big( p(X)< \delta \Big),
\end{equation}
 where $\delta$ is a real valued parameter denoting the {\em threshold}.
 \end{definition}
 Of course, in some applications the inequality in \eqref{eq:2.1} would be reversed. More generally, the {\em threshold} could be a multiple of another polynomial function $q(X)$. In such a case, the threshold risk definition is extended as follows.   
 
 \begin{definition}[Risk with two polynomial functions]
 With the same quantities as in Definition 1 and $q(X) =q_0+q_1X+q_2X^2+\cdots+q_m X^m$
 \begin{equation}\label{eq:2.11}
    R(\delta) = P \Big( p(X)< \delta q(X) \Big) =P \Big( p(X)- \delta q(X) <0 \Big).
\end{equation}
\end{definition}

Note that in both \eqref{eq:2.1} and \eqref{eq:2.11}, we could have defined a $\delta$-perturbed polynomial $p_\delta(X) = p(X) - \delta q(X)$ and expressed the threshold risk as $$R(\delta) = P \Big( p_{\delta}(X)< 0 \Big).$$ Naturally, in the case of \eqref{eq:2.1}, $q(X)$ is identically equal to 1. 

 A change in risk as $\delta_0$ changes to $\delta_1$ will be measured by the ratio
\begin{equation}\label{eq:2.2}
    S(\delta_0,\delta) = \frac{|R(\delta_0)-R(\delta)|}{|\delta_0-\delta|} \approx  |R'(\delta_0)|,
\end{equation}
if the derivative exists and $\delta$ is close to $\delta_0.$ 

\begin{definition}
Threshold risk sensitivity is now defined as follows.
\begin{enumerate}
\item The risk sensitivity at $\delta_0$ is defined as the absolute value of the derivative  $R'(\delta_0)$, if it exists. 
\item If $|R'(\delta_0)|$ is infinite or undefined, then $\delta_0$ is a candidate risk critical point.
\item We say $\delta_0$ is a risk critical point if there does not exist a neighbourhood $\mathcal{N}$ of $\delta_0$ such that $S(\delta_0,\delta)$ is uniformly bounded for all $\delta \in \mathcal{N}.$ 
\end{enumerate}
\end{definition}

This is related to (but not the same) as the {\em hazard function} used in demography and actuarial science. The latter considers the ratio of the probability density function at $\delta_0$ to the probability of exceeding that threshold. 

To analyze the polynomial threshold risk in more detail it will be necessary to consider the real roots of the underlying polynomial. Let $r_1(\delta)\leq r_2(\delta) \ldots \leq r_{n_{1}}(\delta)$ be the real roots of $p_\delta(X)=0$ for $n_1 \leq n$. We can partition $\R$ into union of following intervals
\begin{align*}
I_0(\delta) &= \big(-\infty,r_1(\delta)\big),\\
I_1(\delta) &= \big[r_1(\delta),r_2(\delta)\big),\\
I_2(\delta) &= \big[r_2(\delta),r_3(\delta) \big),\\
\vdots\\
I_{n_1}(\delta) &= \big[r_{n_1}(\delta),\infty \big),
\end{align*}
where we observe that the sign of the polynomial $p_\delta(X)$ cannot change inside any of the intervals $I_j$.
Let $\mathcal{J}^{-}(\delta) = \{ j| p_{\delta}(x)\leq 0, \; \text{if} \; \;x \in I_j\}$. The threshold risk can now be expressed as 
\begin{equation}\label{eq:2.3}
    R(\delta)= \sum_{j \in \mathcal{J^-}(\delta)} R(I_j(\delta)) = \sum_{j \in \mathcal{J^-}(\delta)} \int_{x\in I_j(\delta)} f(x)dx  = \sum_{j\in \mathcal{J^-(\delta)}} [F(r_{j+1}(\delta)) - F(r_j(\delta))],
\end{equation}
where $F(x) = \int_{-\infty}^{x} f(u) du.$ 
Hence its derivative with respect to  $\delta$ is given by
\begin{equation}\label{eq:2.4}
    R'(\delta) = \sum_{j \in \mathcal{J^-}(\delta)} [f(r_{j+1}(\delta))r_{j+1}'(\delta) - f(r_j(\delta))r_{j}'(\delta)],
\end{equation}
whenever the derivatives of these roots at $\delta$ exist.
Let $\text{Dis}(p_{\delta}(X))$ denote the discriminant of the polynomial $p_{\delta}(X).$\\

$\textbf{Remark 1:}$
It will be seen that the discriminant is a polynomial in $\delta$. This is what we refer to as "the hidden polynomial". Consider the equation \begin{equation}\tag{$*$}
\label{eq:dis}
    \text{Dis}(p_\delta(X)) =0.
\end{equation}
In the following proposition, we show that the hidden equation \eqref{eq:dis} is that of finding the roots of a polynomial in $\delta$ with finite order. 

\begin{proposition}
For the case where $p_\delta(X) = p(X)-\delta$, the hidden polynomial $\text{Dis}(p_\delta(X))$ is a polynomial in $\delta$ of order not greater than $n-1$. 
\end{proposition}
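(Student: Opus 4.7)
The plan is to use the Sylvester-resultant representation of the discriminant. Recall that
$$\text{Dis}(p_\delta(X)) \;=\; \frac{(-1)^{n(n-1)/2}}{p_n}\,\text{Res}\bigl(p_\delta(X),\,p_\delta'(X)\bigr),$$
where $\text{Res}(\cdot,\cdot)$ is the determinant of the associated Sylvester matrix. The decisive observation is that $p_\delta'(X) = p'(X)$, so the derivative does \emph{not} depend on $\delta$. Consequently, $\delta$ enters the Sylvester matrix only through the coefficients of $p_\delta$ itself.

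Next I would write the Sylvester matrix explicitly. Since $\deg p_\delta = n$ and $\deg p' = n-1$, the matrix is square of size $(2n-1)\times(2n-1)$, containing $n-1$ rows built from (shifted) copies of the coefficient vector $(p_n,\,p_{n-1},\,\ldots,\,p_1,\,p_0-\delta)$ of $p_\delta$, together with $n$ rows built from (shifted) copies of the coefficient vector of $p'$. Because the only coefficient of $p_\delta$ that depends on $\delta$ is the constant term $p_0-\delta$, the parameter $\delta$ occurs (linearly, with coefficient $-1$) in exactly one entry of each of the $n-1$ $p_\delta$-rows, and in none of the entries of the $n$ $p'$-rows.

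I would then finish by invoking multilinearity of the determinant in its rows. Each $p_\delta$-row can be written uniquely in the form $\mathbf{a}_i + \delta\,\mathbf{b}_i$, where $\mathbf{b}_i$ is a row vector with a single nonzero entry. Expanding the determinant via row-multilinearity produces a polynomial in $\delta$ whose degree cannot exceed the number of rows that depend affinely on $\delta$, namely $n-1$. Dividing by the nonzero constant $p_n$ preserves this degree bound, so $\text{Dis}(p_\delta(X))$ is a polynomial in $\delta$ of degree at most $n-1$, as claimed.

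I do not expect a genuine obstacle here; the only care required is with the Sylvester-matrix conventions, to ensure that the count ``exactly $n-1$ rows contain $\delta$'' is justified. Once that bookkeeping is pinned down, the bound follows immediately from multilinearity. If one wants sharpness of the bound in examples, the same Leibniz-expansion argument identifies the coefficient of $\delta^{n-1}$ as (up to sign and the factor $1/p_n$) the minor of the Sylvester matrix obtained by deleting all the constant-term columns from the $p_\delta$-rows, which is easily checked to be generically nonzero, matching the examples $n=2,3$ where $\deg_\delta \text{Dis}(p_\delta) = n-1$ exactly.
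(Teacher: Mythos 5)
Your argument is correct and follows essentially the same route as the paper: express $\text{Dis}(p_\delta)$ via $\text{Res}(p_\delta,p_\delta')$, observe that $\delta$ enters the Sylvester matrix only through the constant coefficient $p_0-\delta$ of $p_\delta$ (and not through $p'$), and bound the $\delta$-degree of the determinant by counting the $\delta$-dependent entries. The paper phrases the count in terms of the $n-1$ columns that carry $-\delta$ (implicitly relying on the Leibniz expansion), whereas you phrase it in terms of the $n-1$ rows and row-multilinearity — an equivalent bookkeeping — and your added remark on sharpness is a correct bonus not in the paper.
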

\begin{proof}
Using Lemma 6 in the Appendix, we have 
\begin{equation}\label{eq:2.5}
\begin{split}
      &\text{Dis}(p_\delta(X)) =\frac{(-1)^{n(n-1)/2}}{p_n}\text{Res}(p_{\delta}(X),p'_{\delta}(X))\\ =&\frac{(-1)^{\frac{n(n-1)}{2}}}{p_n}\text{Det}\begin{bmatrix}p_n&p_{n-1}&\cdots&\cdots&-\delta&0&\cdots&0\\
    0&p_{n}&\cdots&\cdots&p_1&-\delta&\cdots&0\\
        \vdots&\vdots&\ddots&\ddots&\vdots&\vdots&\ddots&-\delta\\
        np_n&(n-1)p_{n-1}&\cdots&\cdots&p_1&0&\cdots&0\\
    0&np_{n}&\cdots&\cdots&\cdots&p_1&\cdots&0\\
        \vdots&\vdots&\ddots&\ddots&\vdots&\vdots&\ddots&p_1
    \end{bmatrix}.
\end{split}
\end{equation}
Note that there are only $n-1$ columns containing $-\delta$. Therefore, $\text{Dis}(p_\delta(X))$ is a polynomial in $\delta$ of order not greater than $n-1$.
\end{proof}

The next theorem shows that zeroes of the discriminant play an important role in our analysis. In particular, the theorem will show these zeroes contain the candidates of risk critical points.   

\begin{theorem}
Let $\mathcal{Z}(p_{\delta}(X)) = \{ \delta \; | \text{Dis}(p_{\delta}(X)) = 0 \}$. It follows that
\begin{enumerate}
    
    \item[(i)] If the set $C$ of critical risk points is non-empty, then it is a subset of $\mathcal{Z}(p_{\delta}(X))$, 
        \item[(ii)]   There exist at most $n-1$ critical risk points, where $n$ is the degree of $p_{\delta}(X)$.
\end{enumerate}
\end{theorem}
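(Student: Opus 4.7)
The plan is to establish the contrapositive of part (i) and then deduce part (ii) from Proposition~1. Suppose $\delta_0 \notin \mathcal{Z}(p_\delta(X))$, so $\mathrm{Dis}(p_{\delta_0}(X)) \neq 0$. Then every root of $p_{\delta_0}(X)$, whether real or complex, is simple, which is to say that at each such root $r$ one has $\partial p_\delta(r)/\partial x \neq 0$. Applying the implicit function theorem to the equation $p_\delta(x) = 0$, each simple root extends uniquely to a smooth function $\delta \mapsto r_j(\delta)$ on some open interval $\mathcal{N}$ containing $\delta_0$. Shrinking $\mathcal{N}$ if necessary, I can arrange that real roots remain real and complex roots remain bounded away from the real axis throughout $\mathcal{N}$, so that the number $n_1$ of real roots and their natural ordering are locally constant. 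Consequently the sign of $p_\delta(x)$ on each interval $I_j(\delta)$ is also constant for $\delta \in \mathcal{N}$, making the index set $\mathcal{J}^-(\delta)$ locally constant.

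The next step is to use formula \eqref{eq:2.3} to express $R(\delta)$ on $\mathcal{N}$ as the finite sum $\sum_{j \in \mathcal{J}^-(\delta_0)}\bigl[F(r_{j+1}(\delta))-F(r_j(\delta))\bigr]$, where each $r_j$ is smooth. Differentiating produces \eqref{eq:2.4}, which is finite at $\delta_0$ whenever $f$ is finite at each $r_j(\delta_0)$. To conclude that $S(\delta_0,\delta)$ is uniformly bounded on $\mathcal{N}$ (which is precisely the negation of the definition of a risk critical point), it suffices that $f$ be locally bounded near each $r_j(\delta_0)$, equivalently that $F$ be locally Lipschitz at those points. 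This yields $\delta_0 \notin C$, establishing the contrapositive and hence $C \subseteq \mathcal{Z}(p_\delta(X))$, which is part (i).

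For part (ii), Proposition~1 shows that $\mathrm{Dis}(p_\delta(X))$ is a polynomial in $\delta$ of degree at most $n-1$. It is not identically zero (generically, $p(X)-\delta$ has distinct roots, so the discriminant is a nonzero polynomial), so it has at most $n-1$ real roots. Therefore $|\mathcal{Z}(p_\delta(X))| \leq n-1$, and combining with (i) gives $|C| \leq n-1$.

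The main obstacle I anticipate is the regularity of the density $f$. The hypotheses only provide that $X$ is absolutely continuous, which gives $f \in L^1$ but not necessarily bounded or continuous. Passing from ``the expression \eqref{eq:2.4} is finite at $\delta_0$'' to ``the ratio $S(\delta_0,\delta)$ in \eqref{eq:2.2} is uniformly bounded on a neighbourhood'' requires some additional regularity of $f$ near the relevant real roots; a standard sufficient condition is continuity, or at least local boundedness, of $f$ there. Under such mild assumptions the conclusion is immediate, and I would expect the argument either to assume this or to treat exceptional points separately.
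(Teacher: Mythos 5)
Your proof is correct and establishes the same contrapositive that the paper's proof establishes for part (i), and part (ii) is identical (both invoke Proposition 1). The difference is in the tool used to pass from ``$\mathrm{Dis}(p_{\delta_0}(X)) \neq 0$'' to ``the real roots vary nicely near $\delta_0$'': you invoke the implicit function theorem at each simple root, while the paper invokes Theorem 3(a) from its Appendix, which asserts a holomorphic power-series expansion $r_j(\delta) = \sum_{k\geq 0} c_{jk}(\delta-\delta_0)^k$ when $\delta_0$ lies outside the discriminant's zero set. These are essentially the same fact dressed in different clothes --- Theorem 3(a) is the analytic implicit function theorem specialized to perturbed polynomial roots --- so your route is a bit more elementary and self-contained, whereas the paper's route is faster given the Appendix machinery is already in place. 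You do add two pieces of rigor the paper glides past: (1) you explicitly argue that after shrinking the neighbourhood, the number of real roots, their ordering, and the index set $\mathcal{J}^-(\delta)$ are locally constant, which the paper assumes tacitly; and (2) you flag, correctly, that absolute continuity alone gives only $f \in L^1$, so deducing uniform boundedness of the difference quotient $S(\delta_0,\delta)$ on a neighbourhood (which is what the definition of a risk critical point actually demands) requires local boundedness of $f$ at the relevant roots --- an implicit regularity assumption the paper does not state. This is a genuine, if minor, gap in the original argument that your proof surfaces honestly; under the mild extra hypothesis of a locally bounded (or continuous) density near the roots, both proofs are complete.
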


\begin{proof}   
 Proof of (i). We apply Theorem 3 in the Appendix. If $\delta_0 \in \mathcal{Z}(P_{\delta}(X))$, then for some $r_j(\delta_0)$, it has a root expansion with branching order $n' \leq n $, the root $r_{j}(\delta)$ has a Puiseux series representation 
$$ r_j(\delta) = \sum_{k=0}^\infty c_{jk}(\delta - \delta_0)^{k/n'},$$ and 
$$ r_j'(\delta) = \sum_{k=1}^\infty c_{jk}\frac{k}{n'}(\delta - \delta_0)^{k/n'-1}.$$ 
If $n'>1$, $k/n' -1 <0$ if $k<n'$ , therefore $\lim_{\delta \rightarrow \delta_0}(\delta - \delta_0)^{k/n' -1} \rightarrow \infty$. However, not all $\delta_0 \in \mathcal{Z}(P_\delta(X))$ are risk critical points. For instance, $r_j(\delta_0)$ could be a repeated root but not in the support of the distribution of $X$.

Similarly, if $\delta_0 \notin \mathcal{Z}(P_{\delta}(x))$, then the root $r_j(\delta_0)$ has an expansion with a power series 
$$r_j(\delta) = \sum_{k=0}^{\infty} c_{jk}(\delta -\delta_0)^k$$ and
$$r_j'(\delta) = \sum_{k=1}^{\infty} kc_{jk}(\delta -\delta_0)^{k-1}$$
and we have $\lim_{\delta \rightarrow \delta_0} r'_{j}(\delta) < \infty.$ Hence $\delta_0$ is not a risk critical point. Therefore, $C$ is a subset of $\mathcal{Z}(p_{\delta}(x))$.

Proof of (ii) follows immediately from Proposition 1.  
\end{proof}

\textbf{Example 1:} Let $p_{\delta_0}(X) = X^2-\delta_0$ and $X\sim \mathcal{N}(0,1^2)$.
Here $\text{Dis}(p_{\delta_0}(X)) = 4\delta_0$, hence  $\mathcal{Z}(p_{\delta_0}(X))= \{ 0\}.$  If $\delta_0 =0$, $r_1(\delta_0)=0$ is a root with even multiplicity. By Theorem 1, it is a candidate risk critical point. If we perturb $\delta_0$ to $\delta = \delta_0+\varepsilon$, $p_\delta(X)$ has roots $r_1(\delta) = -\sqrt{\delta}, r_2(\delta) = \sqrt{\delta}$ and $r_1'(\delta) = - \frac{1}{2\sqrt{\delta}},r_2'(\delta) =  \frac{1}{2\sqrt{\delta}}$. 
 Recalling the density function of standard normal distribution we see that equation \eqref{eq:2.4} implies that the rate of change of the threshold risk is now given by $R'(\delta) =  \frac{1}{2\sqrt{\delta}}\frac{1}{\sqrt{2\pi}}e^{-0.5\delta} - (- \frac{1}{2\sqrt{\delta}}\frac{1}{\sqrt{2\pi}}e^{-0.5\delta}) = \frac{1}{\sqrt{\delta}}\frac{1}{\sqrt{2\pi}}e^{-0.5\delta} $. Clearly, the latter diverges as  $\delta \to 0$. It is now easy to see that $\delta_0 =0$ is a risk critical point.

\section{Repeating and non-repeating root decomposition of constant perturbation}
In the previous section, we derived a theorem which identifies candidates for risk critical points. However, since the sensitivity of the risk also depends on the coefficients of the root expansion as well as the value of the density function, we need to further decompose the intervals in \eqref{eq:2.3}. In particular, we shall separate the contribution to the threshold risk from repeating and non-repeating roots. 
\begin{definition}
Let $\mathcal{J}^-(\delta) = J^{-}_{r}(\delta) \bigcup J^{-}_{n}(\delta)$, where $J^{-}_{r}(\delta)$ and $J^{-}_{n}(\delta)$ are defined by
\begin{enumerate}
    \item $j \in J^{-}_r(\delta)$ if and only if $j \in \mathcal{J}^-(\delta)$ and $I_j(\delta) = (r_j(\delta), r_{j+1}(\delta)) \in J^-(\delta)$, and at least one of $r_j(\delta)$ and $r_{j+1}(\delta)$ is a repeated root of $p_\delta(X).$ 
    \item $j \in J^{-}_n(\delta)$ if and only if $j \in \mathcal{J}^-(\delta)$ and $I_j(\delta) = (r_j(\delta), r_{j+1}(\delta)) \in J^-(\delta)$, and both $r_j(\delta)$ and $r_{j+1}(\delta)$ are non-repeating roots of $p_\delta(X).$ 
\end{enumerate}
\end{definition}

Using the above definition, \eqref{eq:2.3} can be partitioned as follows
\begin{equation}\label{eq:2.1.1}
    R(\delta) = R_r(\delta)+R_n(\delta) =  \sum_{j\in J^{-}_r(\delta)} R(I_j(\delta)) +  \sum_{j\in J^{-}_n(\delta)} R(I_j(\delta)).
\end{equation}
By Theorem 1, critical points must be among the zeroes of the discriminant of $p_\delta(X)$. However, the discriminant is zero only if the resultant is zero since the leading coefficient $p_n\neq 0$. Now the resultant is zero only if $p_\delta(X)$ has repeated roots. If $p_\delta(X)$ has no repeated roots, then the hidden equation \eqref{eq:dis} cannot be satisfied. Therefore $J_r^{-}(\delta)$ is empty and $\delta$ is not a risk critical point.

\begin{lemma}
For the non-repeated root component $R_n(\delta)$, we have 
$$\lim_{\delta \rightarrow \delta_0} \frac{R_n(\delta)-R_n(\delta_0)}{|\delta-\delta_0|} = c,$$
for some finite scalar $c$.
\end{lemma}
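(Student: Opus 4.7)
The plan is to apply the implicit function theorem to the simple roots of $p_\delta(X)$ at $\delta_0$ and exploit the smoothness of $F$ to show that the non-repeating contribution is differentiable at $\delta_0$ with a finite derivative. This is the complementary story to Theorem 1: the Puiseux branching that caused unbounded sensitivity there had branch order $n' \geq 2$, but here every relevant root has $n' = 1$.

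First, I would note that for each $j \in J_n^-(\delta_0)$, the endpoints $r_j(\delta_0)$ and $r_{j+1}(\delta_0)$ are simple roots of $p_{\delta_0}(X)$, so $p_{\delta_0}'(r_j(\delta_0)) \neq 0$ (and likewise at $r_{j+1}(\delta_0)$). Viewing $P(X,\delta) := p_\delta(X)$ as a bivariate polynomial and applying the implicit function theorem at $(r_j(\delta_0),\delta_0)$ produces a unique real-analytic function $r_j(\delta)$ in a neighbourhood of $\delta_0$ with $P(r_j(\delta),\delta) \equiv 0$, whose derivative
$$r_j'(\delta_0) = -\frac{P_\delta(r_j(\delta_0),\delta_0)}{P_X(r_j(\delta_0),\delta_0)}$$
is finite. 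This is precisely the $n'=1$ instance of the Puiseux expansion used in the proof of Theorem 1, in which the series collapses to an ordinary power series.

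Next, for $\delta$ sufficiently close to $\delta_0$ each such root remains simple and the ordering of these roots is preserved, so the subfamily of indices in $J_n^-(\delta)$ inherited from $J_n^-(\delta_0)$ is locally constant. Writing the corresponding contribution as
$$R_n(\delta) = \sum_{j} \big[F(r_{j+1}(\delta)) - F(r_j(\delta))\big]$$
and using $F' = f$ (which exists by absolute continuity of $X$), each summand is differentiable at $\delta_0$ by the chain rule. Since the sum is finite,
$$R_n'(\delta_0) = \sum_{j}\big[f(r_{j+1}(\delta_0))r_{j+1}'(\delta_0) - f(r_j(\delta_0))r_j'(\delta_0)\big] =: c$$
is a finite scalar. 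The difference quotient with $|\delta-\delta_0|$ in the denominator then has one-sided limits equal to $\pm c$, which delivers the claim (with the natural understanding, consistent with the definition of $S(\delta_0,\delta)$ in \eqref{eq:2.2}, that one takes absolute values or one-sided limits).

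The main obstacle will be conceptual rather than computational: intervals whose endpoints are simple for $\delta \neq \delta_0$ but coalesce into a repeated root at $\delta_0$ (such as $[-\sqrt{\delta},\sqrt{\delta}]$ in Example 1) technically lie in $J_n^-(\delta)$ for $\delta \neq \delta_0$ while disappearing from $J_n^-(\delta_0)$. The cleanest reading of the decomposition assigns these collapsing intervals to $R_r(\delta)$, since they carry exactly the Puiseux singularity associated with a repeated root at $\delta_0$; with this convention the argument above applies verbatim, while without it the lemma would fail on Example 1 itself. Making this convention explicit is the essential step that converts the formal manipulation into a rigorous proof.
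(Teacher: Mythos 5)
Your core argument is essentially the same one the paper uses: for a root that is simple at $\delta_0$, the Puiseux expansion from Theorem~3(a) of the Appendix degenerates to an ordinary power series $r_j(\delta) = \sum_{k\geq 0} c_{jk}(\delta-\delta_0)^k$, which is exactly what the implicit function theorem produces (and your formula $r_j'(\delta_0) = -P_\delta/P_X$ is the coefficient $c_{j1}$ that the paper extracts from that series). The paper then applies the mean value theorem to $F$ on each interval and sums; you apply the chain rule and sum. Same content, different dressing.

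Your final paragraph is the substantive one, and it identifies a real gap in the lemma as stated. The set $J_n^-(\delta)$ in Definition~4 is evaluated at $\delta$, not at $\delta_0$, and $R_n(\delta)$ is defined as the sum over $J_n^-(\delta)$. If a repeated root of $p_{\delta_0}$ splits into two simple roots for $\delta$ near $\delta_0$, the resulting interval lies in $J_n^-(\delta)$ for $\delta\neq\delta_0$ even though its endpoints do \emph{not} admit power-series expansions about $\delta_0$ — they carry the very Puiseux branching that makes $\delta_0$ a candidate critical point. Your check against Example~1 is decisive: there $R_n(\delta) = F(\sqrt{\delta})-F(-\sqrt{\delta})$ for $\delta>0$ and $R_n(0)=0$, so the difference quotient diverges, contradicting the lemma under a literal reading. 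The paper's proof silently restricts attention to roots simple at $\delta_0$ (its phrase ``non-repeating roots have multiplicity~1'' really means multiplicity~1 \emph{at} $\delta_0$), which amounts to the convention you propose: intervals whose endpoints coalesce at $\delta_0$ belong with $R_r$, not $R_n$. That convention is also what Proposition~2(ii) implicitly uses when it assigns the two newborn branches of an even-multiplicity root to the repeated-root analysis. So your reading is the correct one, and making the convention explicit — as you do — repairs a genuine imprecision rather than merely restating the paper's argument.
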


\begin{proof}
We have from \eqref{eq:2.1.1}

\begin{equation}\label{eq:2.1.2}
    R_n(\delta_0) = \sum_{j\in J^{-}_n(\delta_0)} R(I_j(\delta_0)) =\sum_{j\in J^{-}_n(\delta_0)} F(r_{j+1}(\delta_0)) - F(r_{j}(\delta_0)).
\end{equation}
Since non-repeating roots have multiplicity 1, they have a root expansion in a small neighbourhood of $\delta_0$ as $r_j(\delta) = \sum_{k=0}^{\infty}c_{jk}(\delta-\delta_0)^k.$ Understanding that $c_{j0} = r_j(\delta_0)$, implies $r_j(\delta) - r_j(\delta_0) = \sum_{k=1}^{\infty}c_{jk}(\delta-\delta_0)^k$. If we only consider one interval $I_j(\delta)=(r_j(\delta),r_{j+1}(\delta)) \in J_n^{-}(\delta)$, then 
\begin{align*}
R(I_j(\delta)) - R(I_j(\delta_0))  &= \big[F(r_{j+1}(\delta))-F(r_{j+1}(\delta_0))\big] -\big[F(r_{j}(\delta))- F(r_{j}(\delta_0))\big]\\
& \approx f(r_{j+1}(\zeta))(r_{j+1}(\delta) - r_{j+1}(\delta_0)) - f(r_{j}(\zeta))(r_{j}(\delta) - r_{j}(\delta_0)) \\
& =  f(r_{j+1}(\zeta))\sum_{k=1}^{\infty}c_{(j+1)k}(\delta-\delta_0)^k - f(r_{j}(\zeta))\sum_{k=1}^{\infty}c_{jk}(\delta-\delta_0)^k, 
\end{align*}
for some $\zeta$ between $\delta$ and $\delta_0$.
Hence, we have 
$$\lim_{\delta \rightarrow \delta_0} \frac{R(I_j(\delta))-R(I_j(\delta_0))}{|\delta-\delta_0|} =  f(r_{j+1}(\zeta))c_{(j+1)1}-f(r_{j}(\zeta))c_{j1},$$
which is bounded.
Since this holds for every interval in $J_n^{-}(\delta)$, the result follows from \eqref{eq:2.1.2}.
\end{proof}
The above lemma shows that in the case of a threshold $\delta_0$ such that the roots of polynomial $p_{\delta_0}(X)$ are all non-repeating, $\delta_0$ cannot be a risk critical point. However the case where the polynomial $p_{\delta_0}(X)$ has repeated roots is more complicated. Below, we first demonstrate the distinction between the case with the multiplicity of the repeated root being odd and even. Then we analyse the case where the interval $I_j(\delta)$ in $J_r^{-}(\delta)$ contains a repeated and a non-repeated root.

\begin{lemma}
Consider the perturbation of the form $p_\delta(X) = p(X) - \delta$, for $\delta - \delta_0>0$ and sufficiently close to $\delta_0$, where  $r_j(\delta_0)$ is a repeated root of $p_{\delta_0}(X)=0$.
Then the order of the branching point of a repeated root is exactly 2 if the multiplicity of the root is even, and exactly 1 if the multiplicity of the root is odd. 
\end{lemma}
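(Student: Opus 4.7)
My plan is to reduce the local equation $p_\delta(X) = 0$ near the repeated root $r_j(\delta_0)$ of multiplicity $m$ to the canonical form $Z^m = \delta - \delta_0$ by a single analytic change of variable, and then read off the number of real branches by counting real $m$-th roots of a positive number.

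First I would factor. Since $r_j(\delta_0)$ has multiplicity $m$ as a root of $p(X) - \delta_0$, I can write $p(X) - \delta_0 = (X - r_j(\delta_0))^m \, q(X)$ with $q(r_j(\delta_0)) \neq 0$. Introducing $Y = X - r_j(\delta_0)$, $\epsilon = \delta - \delta_0 > 0$, and $\tilde{q}(Y) = q(Y + r_j(\delta_0))$, the equation $p_\delta(X) = 0$ becomes $Y^m \, \tilde{q}(Y) = \epsilon$ with $\tilde{q}(0) \neq 0$. Because $\tilde{q}(0) \neq 0$, a real analytic branch $\phi(Y) = \tilde{q}(Y)^{1/m}$ exists on a neighborhood of $0$ (automatically for $m$ odd; for $m$ even when $\tilde{q}(0) > 0$). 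Setting $Z = Y\phi(Y)$, one has $Z(0) = 0$ and $Z'(0) = \phi(0) \neq 0$, so the inverse function theorem supplies an analytic inverse $Y = \psi(Z)$ near $0$, and the equation takes the normal form $Z^m = \epsilon$.

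Next, for $\epsilon > 0$, counting real solutions is immediate: if $m$ is odd there is a unique real $m$-th root $\epsilon^{1/m}$, yielding a unique real branch of $p_\delta(X) = 0$ emerging from $r_j(\delta_0)$; if $m$ is even there are exactly two real $m$-th roots $\pm \epsilon^{1/m}$, yielding exactly two real branches. The analytic map $\psi$ transfers these bijectively back to real roots of $p_\delta(X)$ near $r_j(\delta_0)$, giving the claimed branching orders of $1$ and $2$ respectively.

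The main obstacle I anticipate is the sign bookkeeping in the even-multiplicity case. If $\tilde{q}(0) < 0$, no real analytic $m$-th root of $\tilde{q}$ exists near $0$, reflecting the genuine fact that no real roots of $p_\delta$ emerge on the side $\epsilon > 0$; they emerge instead on the side $\epsilon < 0$. I would address this either by an implicit orientation convention on $\delta - \delta_0$, or by phrasing the conclusion as ``the branching order is at most $2$, and equals $2$ on the side of $\delta_0$ whose sign matches $\tilde{q}(0)$.'' Beyond this, the argument is a routine combination of the inverse function theorem with the elementary count of real $m$-th roots.
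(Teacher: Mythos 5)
Your proof is correct and takes a genuinely different, more rigorous route than the paper's. The paper argues informally by geometry: shifting the graph of $p_{\delta_0}$ down by $\delta - \delta_0$, the curve crosses the axis at an odd-multiplicity root (so exactly one real root persists) and merely touches the axis at an even-multiplicity root (so two real roots appear on one side). You instead bring the local equation to the normal form $Z^m = \delta - \delta_0$ via the analytic change of variable $Z = Y\,\phi(Y)$ and the inverse function theorem, then count real $m$-th roots. This buys rigour and, valuably, makes explicit a side condition the graphical argument leaves implicit: for even $m$, the pair of real roots appears only on the side of $\delta_0$ where $\delta - \delta_0$ has the same sign as $\tilde{q}(0)$, i.e.\ when $r_j(\delta_0)$ is a local minimum of $p_{\delta_0}$; when it is a local maximum those real roots disappear for $\delta > \delta_0$, a case the paper treats only later in Proposition~2(iii). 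Your identification of this sign-bookkeeping issue is accurate and a genuine sharpening.

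One caveat you inherit from the paper itself: both proofs count \emph{real} branches and then label the count the ``order of the branching point.'' In the sense used by Theorem~3(b) of the Appendix and invoked in Proposition~2, the branching order is the denominator $n'$ of the Puiseux exponent; your own normal form $Z^m = \delta - \delta_0$ shows this denominator is $m$, not $1$ or $2$, whenever $m \geq 3$. The two notions coincide only in the generic case $m = 2$, so the lemma's conclusion as literally stated, and your reading of it, is safe there but misleading for higher multiplicity. Since the paper's proof conflates the same two things, your argument is a faithful and stronger version of what the paper actually establishes.
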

\begin{proof}
The graph of the polynomial $p_{\delta}(X)$ is simply the graph of $p_{\delta_0}(X)$ shifted down by $\delta-\delta_0$. If the multiplicity of the root $r_j(\delta_0)$ is odd, the polynomial $p_\delta(X)$ crosses the x-axis at the corresponding root $r_j(\delta)$. Hence there is only one branch of that root. If on the other hand, the multiplicity of the root is even, the polynomial $p_{\delta_0}(X)$ touches x-axis at $r_j(\delta_0)$, but $p_\delta(X)$ will have two distinct roots: one to the left of $r_j(\delta_0)$ and one to its right. Hence there are two branches of the root. 
\end{proof}

\begin{proposition}
Consider the perturbation of the form $p_\delta(X) = p(X) - \delta$, for $\delta>\delta_0$ and sufficiently close to $\delta_0$. Suppose $r_j(\delta_0)$ is the only repeated root of $p_{\delta_0}(X)$ and $f(r_j(\delta_0))>0$. We have the following cases,
\begin{enumerate}
    \item[(i)] If $r_j(\delta_0)$ has odd multiplicity, then it is not a risk critical point, 
    \item[(ii)] If $r_j(\delta_0)$ has even multiplicity and $p_{\delta_0}(X) > 0$ in a deleted neighbourhood of $r_j(\delta_0)$, then $\delta_0$ is risk critical point. 
    \item[(iii)] If $r_j(\delta_0)$ has even multiplicity and $p_{\delta_0}(X) < 0$ in a deleted neighbourhood of $r_j(\delta_0)$, then $\delta_0$ is not a risk critical point.
\end{enumerate}
\end{proposition}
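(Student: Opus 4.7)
My plan is to localise near the repeated root $r_j(\delta_0)$. Lemma~1 already bounds the contribution to the difference quotient of $R$ from the intervals in $J^{-}_{n}(\delta)$, so the analysis reduces to a small window $W$ around $r_j(\delta_0)$. Since $f$ is continuous and positive at $r_j(\delta_0)$, the change in $R$ inside $W$ is, to leading order, $f(r_j(\delta_0))$ times the change in Lebesgue measure of $\{p_\delta<0\}\cap W$. In each case I then appeal to Lemma~2 to count the real branches of $p_\delta$ emerging from $r_j(\delta_0)$, and to Theorem~3 for their Puiseux expansions, and simply read off the leading-order measure change.

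For case (ii), expand $p_{\delta_0}(X)=c(X-r_j(\delta_0))^{m}+\text{h.o.t.}$, with $m$ even and, by the positivity assumption, $c>0$. Lemma~2 then produces two real branches located at $r_j(\delta_0)\pm\big((\delta-\delta_0)/c\big)^{1/m}(1+o(1))$, between which $p_\delta<0$; this is a genuinely new interval of length $\asymp(\delta-\delta_0)^{1/m}$ contributing to $J^{-}_{r}(\delta)$. Its contribution to $R(\delta)-R(\delta_0)$ is $2f(r_j(\delta_0))\big((\delta-\delta_0)/c\big)^{1/m}(1+o(1))$, and dividing by $\delta-\delta_0$ yields a quantity of order $(\delta-\delta_0)^{1/m-1}$, which diverges since $m\ge 2$. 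Hence $\delta_0$ is a risk critical point.

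Case (iii) is the mirror image: with $c<0$, the equation $c(X-r_j(\delta_0))^{m}=\delta-\delta_0>0$ has no real solution, so the two branches supplied by Lemma~2 become a complex-conjugate pair and no new real root appears in $W$. The neighbouring simple roots shift analytically by Theorem~3, so $\{p_\delta<0\}\cap W$ changes only by a set of measure $O(\delta-\delta_0)$. Combined with Lemma~1 this yields a finite $R'(\delta_0)$, and $\delta_0$ is not a risk critical point.

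For case (i), odd multiplicity, Lemma~2 provides only a single real branch $r_j(\delta)$; because $p_{\delta_0}$ changes sign across $r_j(\delta_0)$, the sign of $p_\delta$ on either side of $r_j(\delta)$ is preserved for $\delta$ close to $\delta_0$, so the combinatorial structure of $J^{-}(\delta)$ inside $W$ is identical to that at $\delta_0$ and only the single moving endpoint $r_j(\delta)$ enters formula (2.4). The main obstacle is to show that this endpoint contributes a bounded amount to $R'(\delta_0)$: a priori the lone real branch could still carry a fractional-power singularity, so the argument must squeeze out of the ``branching order exactly one'' reading of Lemma~2 that $r_j(\delta)$ admits an ordinary analytic expansion in $\delta-\delta_0$, after which Lemma~1 together with (2.4) delivers a finite $R'(\delta_0)$ and hence $\delta_0$ is not critical. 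This is the step I expect to require the most care.
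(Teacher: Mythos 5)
Your handling of cases (ii) and (iii) is correct --- and in case (ii) arguably sharper than the paper's, since you track the true branching exponent $1/m$ coming from the multiplicity rather than the paper's claimed exponent $1/2$; you still obtain a difference quotient of order $(\delta-\delta_0)^{1/m-1}\to\infty$ for $m\ge 2$. Your measure-theoretic reading of case (iii) (no new real root; negligible contribution from $I_j$) likewise lines up with the paper's ``local maximum contributes measure zero'' argument.

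The gap you flag in case (i), however, is genuine and cannot be closed by the route you (or the paper) propose. You hope to infer from Lemma~2's ``branching order exactly $1$'' that the lone real branch $r_j(\delta)$ is holomorphic in $\delta$; but that inference targets a false statement when the odd multiplicity is $m\ge 3$. The branching order $n'$ appearing in Theorem~3 of the Appendix counts the cyclic permutation of \emph{all} $m$ complex branches around $\delta_0$, and for a repeated root of multiplicity $m$ one finds $n'=m$ regardless of the parity of $m$; having a single \emph{real} branch does not make $n'=1$. (The proof of Lemma~2 in the paper in fact only counts real branches, which is a different quantity, and this is where the paper's own proof of Proposition~2(i) goes wrong.) A concrete counterexample settles the matter: take $p(X)=X^3$, $\delta_0=0$, and $X\sim\mathcal{N}(0,1)$ with cdf $\Phi$ and density $\phi$. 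Here $\mathrm{Dis}(X^3-\delta)=-27\delta^2$, so $\delta_0=0\in\mathcal{Z}(p_\delta(X))$; the only real root is $r_1(\delta)=\delta^{1/3}$, whose Puiseux expansion forces $n'=3$; and
$$R(\delta)=\Phi\big(\delta^{1/3}\big),\qquad R'(\delta)=\tfrac{1}{3}\,\phi\big(\delta^{1/3}\big)\,\delta^{-2/3}\;\longrightarrow\;\infty\quad\text{as }\delta\to 0.$$
Hence $\delta_0=0$ \emph{is} a risk critical point even though $r_1(\delta_0)$ has odd multiplicity, so Proposition~2(i) is false as stated for $m\ge 3$, and your plan to ``squeeze out'' an analytic expansion from Lemma~2 would be chasing a claim that does not hold. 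A corrected version of the proposition would say that for any repeated root of multiplicity $m\ge 2$ with $f(r_j(\delta_0))>0$, the threshold $\delta_0$ is risk critical unless $p_{\delta_0}$ is strictly negative in a deleted neighbourhood of the root (your case (iii)), since only in that case do no new real branches survive the perturbation.
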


\begin{proof}
(i) If $r_j(\delta_0)$ has odd multiplicity, by Lemma 2, the order of the branching point of $r_j(\delta_0)$ is $n'=1$. Therefore, by Theorem 3(b) of the Appendix, it has the root expansion in a small neighbourhood of  $\delta_0$ as $r_j(\delta) = \sum_{k=0}^{\infty}c_{jk}(\delta-\delta_0)^{k}.$ Understanding that $c_{j0} = r_j(\delta_0)$, it follows that $r_j(\delta) - r_j(\delta_0) = \sum_{k=1}^{\infty}c_{jk}(\delta-\delta_0)^{k}$, and without loss of generality, we have that $r_{j+1}(\delta)$ is a non-repeated root of $p_\delta (X) =0$. 
It now follows that
$$\lim_{\delta \rightarrow \delta_0} \frac{R(I_j(\delta))-R(I_j(\delta_0))}{|\delta-\delta_0|} =  f(r_{j+1}(\zeta))c_{(j+1)1}-f(r_{j}(\zeta))c_{j1},$$
for some $\zeta$ between $\delta$ and $\delta_0$.
The right hand side of the above is bounded.  Similarly for contributions to $R(\delta)$ for all other intervals $I_k(\delta)$ where $k\neq j$. Hence  $\delta_0$ is not a risk critical point.

(ii) Similarly, by Lemma 2, if $r_j(\delta_0)$ has even multiplicity and $p_{\delta_0}(X) > 0$ in a deleted neighbourhood of $r_j(\delta_0)$, the order of branching point is $n'=2$. Therefore, by Theorem 3(b) of the Appendix, there are two root expansions in a small neighbourhood of $\delta_0$ namely  $r_j(\delta) = \sum_{k=0}^{\infty}c_{jk}(\delta-\delta_0)^{k/2}$ and $r_{j+1}(\delta) = \sum_{k=0}^{\infty}c_{(j+1)k}(\delta-\delta_0)^{k/2}$.
It follows that $c_{j0} = c_{(j+1)0} = r_j(\delta_0)$. Next
\begin{align*}
R(I_j(\delta)) - R(I_j(\delta_0))  &= \big[F(r_{j+1}(\delta))-F(r_{j+1}(\delta_0))\big] -\big[F(r_{j}(\delta))- F(r_{j}(\delta_0))\big]\\
& \approx f(r_{j+1}(\zeta))(r_{j+1}(\delta) - r_{j+1}(\delta_0)) - f(r_{j}(\zeta))(r_{j}(\delta) - r_{j}(\delta_0)) \\
& =  f(r_{j+1}(\zeta))\sum_{k=1}^{\infty}c_{(j+1)k}(\delta-\delta_0)^{k/2} - f(r_{j}(\zeta))\sum_{k=1}^{\infty}c_{jk}(\delta-\delta_0)^{k/2},
\end{align*}
for some $\zeta$ between $\delta$ and $\delta_0$.
If we consider $\varepsilon=\delta-\delta_0 >0$, we have 
\begin{equation}\label{eq:2.1.3}
    \begin{split}
        &\lim_{\delta \rightarrow \delta_0} \frac{R(I_j(\delta))-R(I_j(\delta_0))}{|\delta-\delta_0|} \\
&\approx  \lim_{\varepsilon \rightarrow 0} \{ f(r_{j+1}(\zeta))\big[c_{(j+1)1}\varepsilon^{-1/2}+c_{(j+1)2}+c_{(j+1)(3)}\varepsilon^{3/2 -1}+\ldots\big] \\
& - f(r_{j}(\zeta))\big[c_{j1}\varepsilon^{-1/2}+c_{j2}+c_{j3}\varepsilon^{3/2 -1} +\ldots\big] \},\\
& \approx \lim_{\varepsilon \rightarrow 0}\{f(r_{j+1}(\zeta))\big[c_{(j+1)1}\varepsilon^{-1/2}+c_{(j+1)2}\big]
- f(r_{j}(\zeta))\big[c_{j1}\varepsilon^{-1/2}+c_{j2}\big]  \} .
    \end{split}
\end{equation}
Since $r_j(\delta_0)$ has even multiplicity, for $\delta>\delta_0$ and sufficiently close to $\delta_0$, one of the roots, say $r_{j+1}(\delta)$ is bigger than $r_j(\delta_0)$, and the other one is smaller. Hence if $r_{j+1}(\delta) > r_j(\delta)$, then $c_{(j+1)1}>0$ and $c_{j1}<0$. Therefore $f(r_{j+1}(\zeta))c_{(j+1)1} -f(r_{j}(\zeta))c_{j1}>0 $, and the above limit diverges as $\varepsilon \rightarrow 0$. 
Note that contributions to $R(\delta)$ for all other intervals $I_k(\delta)$ where $k \neq j$ are constant as in (i). Hence $\delta_0$ is a risk critical point.\\

(iii) If $r_j(\delta_0)$ has even multiplicity and $p_{\delta_0}(X) <0 $ in a deleted neighbourhood of $r_j(\delta_0)$, then $r_j(\delta_0)$ is a local maximum of $p_{\delta_0}(X)$. Hence, the interval $I_j(\delta_0)$ is of measure $0$ and contributes nothing to $R(I_j(\delta_0))$. When the graph of the polynomial is shifted down by $\varepsilon= \delta - \delta_0,$ there is no longer a root in a sufficiently small neighbourhood of $r_j(\delta_0)$. Hence, again, there is no contribution to $R(\delta)$ from $R(I_j(\delta))$. Thus $\delta_0$ is not a risk critical point.
\end{proof}

\begin{corollary}
Assume conditions of Proposition 2 (ii) apply to  two or more distinct repeated roots $r_1(\delta_0),\ldots,r_l(\delta_0)$,  with even multiplicity. If the density function $f(r_j(\delta_0))>0$ for at least one of these roots, then $\delta_0$ is a risk critical point. 
\end{corollary}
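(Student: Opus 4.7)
The plan is to reduce the multi-root corollary to a superposition of the single-root analysis already carried out in Proposition 2(ii), and then to rule out cancellation between the individual divergent contributions. First I would choose pairwise disjoint open neighborhoods $N_k$ of the distinct repeated roots $r_k(\delta_0)$, $k=1,\dots,l$, small enough that for $\delta$ in a one-sided neighborhood $(\delta_0,\delta_0+\eta)$ each $N_k$ still separates the two branches of $r_k(\delta_0)$ guaranteed by Lemma 2 and contains no other roots of $p_\delta(X)$. This permits the decomposition
\begin{equation*}
R(\delta) - R(\delta_0) \;=\; \sum_{k=1}^l \Delta R_k(\delta) \;+\; \Delta R_{\mathrm{rest}}(\delta),
\end{equation*}
where $\Delta R_k(\delta)$ collects the change in risk inside $N_k$ and $\Delta R_{\mathrm{rest}}(\delta)$ collects the remaining contributions, whose intervals have only non-repeated roots of $p_{\delta_0}(X)$ as endpoints.

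Next I would apply the argument of Proposition 2(ii) locally inside each $N_k$. By Lemma 2 the branching order at each $r_k(\delta_0)$ is exactly $2$, and the Puiseux calculation culminating in \eqref{eq:2.1.3} yields
\begin{equation*}
\frac{\Delta R_k(\delta)}{|\delta-\delta_0|} \;=\; A_k\,(\delta-\delta_0)^{-1/2} \;+\; O(1), \qquad A_k \;=\; f(r_k(\delta_0))\bigl(c_{k}^{+} - c_{k}^{-}\bigr),
\end{equation*}
where $c_{k}^{+}>0$ and $c_{k}^{-}<0$ are the leading Puiseux coefficients of the right- and left-moving branches emanating from $r_k(\delta_0)$. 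The key structural observation is that $A_k\ge 0$ uniformly in $k$, since both factors $f(r_k(\delta_0))\ge 0$ and $c_k^{+}-c_k^{-}>0$ are nonnegative; moreover $A_k>0$ precisely when $f(r_k(\delta_0))>0$. The hypothesis guarantees a strictly positive $A_{k^*}$ for at least one index $k^*$.

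Finally, Lemma 1 applied to the intervals contributing to $\Delta R_{\mathrm{rest}}(\delta)$ gives $\Delta R_{\mathrm{rest}}(\delta) = O(|\delta-\delta_0|)$, which is negligible compared with the $(\delta-\delta_0)^{1/2}$ terms. Collecting the pieces,
\begin{equation*}
\frac{R(\delta)-R(\delta_0)}{|\delta-\delta_0|} \;\ge\; A_{k^*}\,(\delta-\delta_0)^{-1/2} \;+\; O(1) \;\longrightarrow\; +\infty
\end{equation*}
as $\delta\downarrow\delta_0$, so $\delta_0$ is a risk critical point. The main obstacle is precisely the noncancellation step: in principle, divergent $(\delta-\delta_0)^{-1/2}$ terms arising from different repeated roots could annihilate one another, and what prevents this is the uniform sign pattern $c_k^{+}>0$, $c_k^{-}<0$ produced by applying the hypothesis of Proposition 2(ii) (even multiplicity together with $p_{\delta_0}(X)>0$ in a deleted neighborhood of each root) simultaneously at every $r_k(\delta_0)$.
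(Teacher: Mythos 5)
Your proof is correct and follows essentially the same approach as the paper, which simply invokes Proposition 2(ii) rootwise and asserts that the contributions compound. The one thing your write-up does better than the paper's terse argument is make the noncancellation step explicit: you observe that every divergent coefficient $A_k = f(r_k(\delta_0))(c_k^+ - c_k^-)$ is nonnegative because $c_k^+>0$, $c_k^-<0$ at every even-multiplicity local minimum, and $f\ge 0$; so the divergences reinforce rather than annihilate, and a single index with $f(r_{k^*}(\delta_0))>0$ suffices. The paper implicitly relies on the same sign pattern (``these even multiplicity roots are local minima'') but does not spell it out, so your version is the more careful rendering of the same idea.
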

\begin{proof}
This is a generalization of Proposition 2. Since these even multiplicity roots are local minima, they contribute to the threshold risk sensitivity, in the limit as $\delta \to \delta_0$, as in the proof of part (ii) of Proposition 2. Furthermore, there is no such contribution from any other roots.   

\end{proof}

$\textbf{Remark 2:}$ Note that the assumption that $X$ was an absolutely continuous random variable could be easily relaxed, but at the cost of more complicated notation and some additional technicalities.  For instance,  if $\tilde{x}$ is a discontinuity of the cdf $F(x)$ and $\delta_0$ is a threshold such that the $j^{th}$ root $r_j(\delta_0)= \tilde{x},$ then $\delta_0$ is a candidate for a risk critical point.

\section{Rational Function}
Next we analyze the situation when the underlying function of the random variable $X$ is a rational function, namely, a ratio of two polynomials. Indeed, this was the case in the motivating study \cite{filar2020}.
\begin{definition}[Risk with rational function] Let $X$ be a random variable and $\delta \in \R$. Let $p(X) =p_0+p_1X+\ldots +p_nX^n$ and $q(X) = q_0 + q_1X+\ldots q_mX^m$ be two co-prime polynomials. Let $h(X) = \frac{p(X)}{q(X)}$ and consider the threshold risk 
\begin{equation}\label{eq:3.1}
\begin{split}
R(\delta) &= P\bigg(h(X)<\delta\bigg)\\
&=P\big(p(X)-\delta  q(X)<0|q(X)>0\big)P\big(q(X)>0\big)\\
&+P\big(p(X)-\delta q(X)>0|q(X)<0\big)P\big(q(X)<0\big).
\end{split}
\end{equation}
\end{definition}
The risk sensitivity is defined as before in Definition 2.
In this rational function case the roots of the denominator $q(X)$ will impact \eqref{eq:3.1}. To compute the threshold risk, let $\tilde{r}_1\leq \tr_2 \leq \ldots \leq \tr_{m'}$ be the real roots of polynomial $q(X)$ where $m'\leq m$. We can factor $q(X)$ as 
$$q(X) = \bigg[\prod_{d=1}^{m'}\big(X-\tr_d\big)\bigg]\tilde{q}(X).$$
Now we can partition $\R$ by these roots as 

\begin{equation}\label{eq:3.2}
    \R = \bigcup_{j=1}^{m'+1} \tilde{I}_j = (-\infty,\tr_1) \cup [\tr_1,\tr_2)\cdots \cup[\tr_{m'},\infty).
\end{equation}

Some of the intervals can have zero length if there are repeated roots for $q(X)$. Next, we define the event of interest as
\begin{equation}\label{eq:3.4}
    E = \bigg\{ x \; \bigg| \frac{p(x)}{q(x)} \leq \delta \bigg\} = \bigcup_{j=1}^{m'+1} E_j,
\end{equation}
where $E_j = E \cap \tilde{I}_j$. Then we partition the index set $J =J^+ \cup J^- =  \{1,2,\ldots,m'\}$ as
\begin{align*}
    J^+ &= \big\{ j\; | q(x)>0 \;\;\;\forall x \in \tilde{I_j}\big\},\\
        J^- &= \big\{ j\; | q(x)<0 \;\;\;\forall x \in \tilde{I_j}\big\}.
\end{align*}

The threshold risk probability can be computed as 
\begin{equation}\label{eq:3.5}
R(\delta) = P(E) = \sum_{j\in J^+}P(E_j) + \sum_{j\in J^-}P(E_j).
\end{equation}
For the polynomial function $h_\delta(X) = p(X) - \delta q(X)$, we can express the events $E_j$ in \eqref{eq:3.5} more explicitly conditioned on the sign of $h_\delta(X)$
\begin{equation}\label{eq:3.6}
\begin{split}
    &\text{if}\; \; j\in J^+,  \; E_j = \big \{ x \in \tilde{I}_j\big\} \cap \big\{ x |h_\delta(x) \leq 0 \big\},\\
        &\text{if}\; \; j\in J^-,  \; E_j = \big \{ x \in \tilde{I}_j\big\} \cap \big\{ x |h_\delta(x) > 0 \big\}.
\end{split}
\end{equation}

Let $r_1(\delta) \leq r_2(\delta) \leq \cdots \leq r_n'(\delta)$, where $n'\leq n$, be the real roots of this polynomial $h_\delta(X)$. We can factor this polynomial as $$h_\delta(X) = \bigg[\prod_{k=1}^{n'}\big(X-r_k(\delta) \big)\bigg]\tilde{h}(X),$$
and similarly, we can partition $\R$ by these roots as 

\begin{equation} \label{eq:3.61}
    \R = \bigcup_{k=1}^{n'+1} I_k(\delta) = (-\infty,r_1(\delta)) \cup [r_1(\delta),r_2(\delta))\cdots \cup[r_{n'}(\delta),\infty).
\end{equation}

Using \eqref{eq:3.6} and \eqref{eq:3.61}, we can sub-divide each $E_j$ into parts intersecting with the intervals $I_k(\delta)$, by defining 
\begin{equation}\label{eq:3.7}
    E_{jk}(\delta) =E_j \cap I_k(\delta) = E \cap \tilde{I}_j \cap I_k(\delta).
\end{equation}

There are two cases to consider
\begin{equation}\label{eq:3.8}
    \tilde{I}_j \cap I_k(\delta)  = I_{jk}(\delta) = \begin{cases}
    \bigg[\tr_{j-1}\vee r_{k-1}(\delta), \tr_j\wedge r_k(\delta)\bigg),\;\;\;\text{if}\;\; \tilde{I}_j \cap I_k(\delta) \neq \emptyset,\\\\
    \emptyset,\;\;\;\;\;\;\;\;\;\;\;\;\;\;\;\;\;\;\;\;\;\;\;\;\;\;\;\;\;\;\;\;\;\;\;\;\;\;\;\;\;\;\;\text{if}\;\; \tilde{I}_j \cap I_k(\delta) = \emptyset,
    \end{cases}
\end{equation}
where $\tr_{j-1}\vee r_{k-1}(\delta) = \text{max}\big(\tr_{j-1}, r_{k-1}(\delta)\big)$ and $\tr_j\wedge r_k(\delta) = \text{min}\big(\tr_{j}, r_{k}(\delta)\big).$ It is important to note that the sign of $h_\delta(X)$ remains constant on each interval $I_k(\delta)$ and hence also on $I_{jk}$. If $j\in J^+$
\begin{equation}\label{eq:3.9}
E_{jk}(\delta) = \begin{cases}
 I_{jk}(\delta) \;\;\; \text{if}\;\; h_\delta(X)\leq 0  \;\text{on}\;I_k(\delta),\\ \\
\emptyset \;\;\;\;\; \;\;\; \text{otherwise}.
\end{cases}
\end{equation}
If $j\in J^-$
\begin{equation}\label{eq:3.10}
E_{jk}(\delta) = \begin{cases}
I_{jk}(\delta) \;\;\; \text{if}\;\; h_\delta(X)> 0  \;\text{on}\;I_k(\delta),\\ \\
\emptyset \;\; \text{if}\;\;\; \text{otherwise}.
\end{cases}
\end{equation}
Hence, we can refine \eqref{eq:3.5} and compute the threshold risk probability for the rational function $h(X)$ as 
\begin{equation}\label{3.11}
R(\delta) = \sum_{j\in J^+}\sum_{k=1}^{n'+1}P(I_{jk}(\delta)) + \sum_{j\in J^-}\sum_{k=1}^{n'+1}P(I_{jk}(\delta)).
\end{equation}
Let 
\begin{align*}
K^- & = \bigg\{k\;|\;\; h_\delta(X) \leq 0 \;\; \text{on}\;\; I_k(\delta) \bigg\}\\
K^+ & = \bigg\{k\;|\;\; h_\delta(X)>0 \;\; \text{on} \;\;I_k(\delta) \bigg\}.    
\end{align*}
Substituting \eqref{eq:3.9} and \eqref{eq:3.10} into \eqref{3.11} and rearranging, we obtain the risk probability
\begin{equation}\label{eq:3.12}
    \begin{split}
    R(\delta) &=    \sum_{j\in J^+}\sum_{k \in K^-}P(E_{jk}(\delta)) +\sum_{j\in J^-}\sum_{k \in K^+}P(E_{jk}(\delta)).\\
    &= \sum_{j\in J^+}\sum_{k \in K^-}\bigg[F\big(\tr_j \wedge r_k(\delta) \big)-F\big(\tr_{j-1} \vee r_{k-1}(\delta) \big) \bigg]\\
    & + \sum_{j\in J^-}\sum_{k \in K^+}\bigg[F\big(\tr_j \wedge r_k(\delta) \big)-F\big(\tr_{j-1} \vee r_{k-1}(\delta) \big) \bigg],
    \end{split}
\end{equation}
and its derivative, whenever it exists.
\begin{equation}\label{eq:3.13}
    \begin{split}
    R'(\delta) &= \sum_{j\in J^+}\sum_{k \in K^-}\bigg[f\big(\tr_j \wedge r_k(\delta) \big)\frac{d}{d\delta}\big(\tr_j \wedge r_k(\delta) \big)-f\big(\tr_{j-1} \vee r_{k-1}(\delta) \big)\frac{d}{d\delta}\big(\tr_{j-1} \vee r_{k-1}(\delta) \big)\ \bigg]\\
    & + \sum_{j\in J^-}\sum_{k \in K^+}\bigg[f\big(\tr_j \wedge r_k(\delta) \big)\frac{d}{d\delta}\big(\tr_j \wedge r_k(\delta) \big)-f\big(\tr_{j-1} \vee r_{k-1}(\delta) \big)\frac{d}{d\delta}\big(\tr_{j-1} \vee r_{k-1}(\delta) \big) \bigg].
    \end{split}
\end{equation}

Equations \eqref{eq:3.12}-\eqref{eq:3.13} are analogous to \eqref{eq:2.3}-\eqref{eq:2.4} in the one polynomial case. Naturally, they reflect an additional degree of complexity arising from the possibility of overlaps between intervals $\tilde{I}_j$ and $I_k(\delta)$. While this complexity is unavoidable, one case where there are no difficulties is described in the next lemma.

\begin{lemma}
Let $\mathcal{Z}(h_{\delta}(X)) = \{ \delta \; | \text{Dis}(h_{\delta}(X)) = 0 \}$ and consider the intervals $\tilde{I}_j$ in \eqref{eq:3.2} and $I_k(\delta)$ in \eqref{eq:3.61}. Then  $\delta$ is not a candidate for risk critical point if each interval $\tilde{I}_j$ for $j \in J^+$ is contained in some interval  $I_k$ for  $k \in K^-$ or similarly if each interval $\tilde{I}_j$ for $j \in J^-$ is contained in some interval  $I_k(\delta)$ for  $k \in K^+$.
\end{lemma}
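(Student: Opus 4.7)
The plan is to exploit the fact that, under the stated hypothesis, the min/max operators $\tr_{j-1}\vee r_{k-1}(\delta)$ and $\tr_j\wedge r_k(\delta)$ appearing in \eqref{eq:3.12} collapse to the $\delta$-independent quantities $\tr_{j-1}$ and $\tr_j$, so that the corresponding double sum in \eqref{eq:3.12} contributes a locally constant term to $R(\delta)$.

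First, I would treat the condition involving $J^+$: for every $j\in J^+$ there is some $k\in K^-$ with $\tilde{I}_j\subseteq I_k(\delta)$. Since the intervals $I_k(\delta)$ in \eqref{eq:3.61} partition $\R$, such a $k=k(j)$ is unique and $\tilde{I}_j\cap I_k(\delta)=\emptyset$ for every $k\neq k(j)$. The containment $[\tr_{j-1},\tr_j)\subseteq[r_{k(j)-1}(\delta),r_{k(j)}(\delta))$ forces $\tr_{j-1}\vee r_{k(j)-1}(\delta)=\tr_{j-1}$ and $\tr_j\wedge r_{k(j)}(\delta)=\tr_j$. Substituting into the first double sum of \eqref{eq:3.12} collapses it to $\sum_{j\in J^+}[F(\tr_j)-F(\tr_{j-1})]$, which is independent of $\delta$. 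The symmetric argument handles the condition involving $J^-$, reducing the second double sum in \eqref{eq:3.12} in exactly the same way.

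Once both sums in \eqref{eq:3.12} are seen to be locally constant in $\delta$, the derivative \eqref{eq:3.13} vanishes identically on a neighbourhood of the point in question, so the difference quotient $S(\delta_0,\delta)$ in \eqref{eq:2.2} is trivially bounded. Hence $\delta$ cannot be a risk critical point.

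The main obstacle I anticipate is promoting the containment assumption from a single value of $\delta$ to a small neighbourhood, since the collapsed form of \eqref{eq:3.12} is only useful if it remains valid on an open set. I would address this by taking the containments to be \emph{strict}, i.e., $r_{k(j)-1}(\delta)<\tr_{j-1}$ and $\tr_j<r_{k(j)}(\delta)$, and invoking continuity of the real roots $r_k(\cdot)$ via the Puiseux expansions of Theorem 3 of the Appendix, even at points where $h_\delta$ has repeated roots. In particular, any splitting of a repeated root $r_k(\delta_0)$ remains confined to an arbitrarily small neighbourhood of $r_k(\delta_0)$, which is separated from $\tilde{I}_j$ by strict containment, so the partition structure relative to the fixed $\tr_j$'s is preserved. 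Degenerate boundary cases in which a containment is tight would be treated separately or excluded from the hypothesis.
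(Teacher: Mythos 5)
Your proof takes essentially the same route as the paper's: under the containment hypothesis the min/max operators in \eqref{eq:3.12} collapse to the $\delta$-independent values $\tr_{j-1}$ and $\tr_j$, so that $R'(\delta)=0$ in \eqref{eq:3.13} and $\delta$ cannot be a candidate risk critical point. You additionally and correctly flag that the collapse must persist on a neighbourhood of $\delta$ in order to differentiate, and propose securing this via strict containments together with continuity of the root branches --- a genuine refinement that the paper's proof leaves implicit.
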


\begin{proof}
Under the interval inclusion hypotheses it can be easily verified that for every $j,$ $\tr_j \wedge r_k(\delta) = \tr_j$ and $\tr_{j-1} \vee r_{k-1}(\delta) = \tr_{j-1}$.
Hence equation \eqref{eq:3.13} is well-defined and reduces to  
\begin{equation*}
    \begin{split}
    R'(\delta) &= \sum_{j\in J^+}\sum_{k \in K^-}\bigg[f\big(\tr_j  \big)\frac{d}{d\delta}\big(\tr_j\big)-f\big(\tr_{j-1} \big)\frac{d}{d\delta}\big(\tr_{j-1} \big)\ \bigg]\\
    & + \sum_{j\in J^-}\sum_{k \in K^+}\bigg[f\big(\tr_j  \big)\frac{d}{d\delta}\big(\tr_j \big)-f\big(\tr_{j-1}  \big)\frac{d}{d\delta}\big(\tr_{j-1} \big) \bigg]\\
    & = 0,\\
    \end{split}
\end{equation*}
since none of the terms depend on $\delta$. Thus the derivative of the risk is $0$ and hence $\delta$ cannot be a risk critical point. 
\end{proof}

\section{Hidden equation of polynomially perturbed case}
In this section, we are going to discuss the hidden equations of the perturbed polynomial in the form $h_\delta(X) = p(X) - \delta q(X)$. As before,  assume $p(X) =p_0+p_1X+\ldots +p_nX^n$ and $q(X) = q_0 + q_1X+\ldots q_mX^m$ are polynomials. This is a more complicated case compared to the constant perturbation discussed in Section 3.
\begin{lemma}
Let $h_\delta(X) = p(X)-\delta q(X)$ where $\text{deg}(p(X))=n, \text{deg}(q(X))=m$. The maximum order of the hidden polynomial $\text{Dis}(h_\delta(X))(\delta)$ has the following cases:
\begin{enumerate}
    \item $\text{deg}(\text{Dis}(h_\delta(X))(\delta)) \leq 2m-2$ if $m>n$ and $q_m \delta \neq 0$,
    \item $\text{deg}(\text{Dis}(h_\delta(X))(\delta)) \leq 2n-2$ if $m<n$ and $p_n \neq 0$,
    \item $\text{deg}(\text{Dis}(h_\delta(X))(\delta)) \leq 2n-2$ if $m =n$ and $p_n - q_n \delta \neq 0.$ 
\end{enumerate}
\end{lemma}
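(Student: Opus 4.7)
The plan is to apply the identity from Lemma 6 in the Appendix,
\[
\text{Dis}(h_\delta(X)) \;=\; \frac{(-1)^{d(d-1)/2}}{a_d}\,\text{Res}\bigl(h_\delta(X),\,h_\delta'(X)\bigr),
\]
where $d$ is the degree of $h_\delta(X)$ in $X$ and $a_d$ is its leading coefficient, and then to exploit the homogeneity of the resultant in the formal coefficients of $h_\delta$.

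First, I would identify $d$ in each of the three cases. The hypothesis $-\delta q_m \neq 0$ in case 1 forces $d = m$; in case 2, since $m < n$ we have $q_n = 0$ and $a_n = p_n \neq 0$, so $d = n$; in case 3 the assumption $p_n - q_n\delta \neq 0$ gives $d = n$. Writing $h_\delta(X) = \sum_{k=0}^d a_k X^k$ with $a_k = p_k - \delta q_k$ (and the usual convention $p_k = 0$ for $k > n$, $q_k = 0$ for $k > m$), each coefficient $a_k$ is an affine polynomial in $\delta$, i.e.\ of degree at most one.

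Next, I would invoke the structural fact that $\text{Res}(h_\delta, h_\delta')$, regarded as a polynomial in the indeterminates $a_0,\dots,a_d$, is homogeneous of total degree $2d-1$: it is the determinant of the $(2d-1)\times(2d-1)$ Sylvester matrix, each of whose rows is linear in $(a_0,\ldots,a_d)$. By Lemma 6, this resultant is divisible by $a_d$ in $\mathbb{Z}[a_0,\dots,a_d]$, so $\text{Dis}(h_\delta)$ is homogeneous of total degree $2d-2$ in the formal coefficients. Substituting the affine expressions $a_k = p_k - \delta q_k$ then produces a polynomial in $\delta$ of degree at most $2d-2$. Plugging in $d=m$ in case 1 and $d=n$ in cases 2 and 3 yields the three bounds $2m-2$, $2n-2$ and $2n-2$ claimed.

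The main obstacle: the bound $2d-2$ hinges on the degree reduction from the formal division by $a_d$, and this is subtler in case 1, where $a_d = -\delta q_m$ itself has positive $\delta$-degree. A naive row-multilinearity argument applied directly to the $(2d-1)\times(2d-1)$ Sylvester determinant gives only the weaker bound $2d-1$, and the improvement by one requires working at the level of formal coefficients, using that divisibility of the resultant by $a_d$ is an identity in $\mathbb{Z}[a_0,\dots,a_d]$ rather than merely an analytic fact in $\delta$. Once this formal divisibility is in place, the homogeneity-plus-substitution argument applies uniformly to all three cases.
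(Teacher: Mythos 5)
Your proof is correct and rests on the same two ingredients as the paper's: the identity $\text{Dis}(h_\delta) = (-1)^{d(d-1)/2}a_d^{-1}\text{Res}(h_\delta,h_\delta')$ from Lemma 6 and the structure of the $(2d-1)\times(2d-1)$ Sylvester determinant with entries affine in $\delta$. The paper's argument is carried out explicitly only for case 1 ($m>n$), where the leading coefficient $a_d=-\delta q_m$ has $\delta$-degree $1$, so dividing the degree-$\le 2m-1$ resultant by it visibly drops the bound to $2m-2$; the remaining cases are dismissed as ``very similar.'' Your reformulation via homogeneity of $\text{Dis}$ in the formal coefficients $a_0,\dots,a_d$ is a genuine refinement: because $a_d$ divides $\text{Res}(h_\delta,h_\delta')$ as an identity in $\mathbb{Z}[a_0,\dots,a_d]$, the discriminant is homogeneous of degree $2d-2$, and substituting the affine expressions $a_k=p_k-\delta q_k$ immediately gives $\deg_\delta\le 2d-2$ in all three cases simultaneously. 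This is cleaner for case 2 ($m<n$, $a_d=p_n$ constant) and for the subcase $q_n=0$ of case 3, where dividing by $a_d$ does not reduce the $\delta$-degree and the bare row-multilinearity argument of the paper's case-1 proof would only yield the weaker bound $2n-1$. One small quibble: in your ``main obstacle'' paragraph you flag case 1 as the subtle one, but in fact case 1 is the easy one for the paper's method (the $\delta$-factor in $a_d$ does the degree reduction for free once polynomiality of the quotient is known); the genuine improvement your homogeneity argument buys is precisely in the cases where $a_d$ has $\delta$-degree $0$.
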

\begin{proof}
Suppose $m> n$. Using Lemma 6 in the Appendix, we can compute the discriminant in $\delta$ as follows, 
\begin{equation}\label{eq:4.1}
\begin{split}
      &\text{Dis}(h_\delta(X)) =\frac{(-1)^{n(n-1)/2}}{(-q_m\delta)}\text{Res}(h_{\delta}(X),h'_{\delta}(X))\\ &=(-1)^{n(n-1)/2}(-q_m\delta)^{-1} \times\\
      &\text{Det}\begin{bmatrix}-q_m\delta&-q_{m-1}\delta&\cdots&\cdots&p_0-q_0\delta&0&\cdots&0\\
    0&-q_{m}\delta&\cdots&\cdots&\cdots&p_0-q_0\delta&\cdots&0\\
    \vdots&\vdots&\ddots&\ddots&\vdots&\vdots&\ddots&p_0-q_0\delta\\
    -mq_m\delta&-(m-1)q_{n-1}\delta&\cdots&\cdots&p_1-q_1\delta&0&\cdots&0\\
    0&-mq_{m}\delta&(m-1)q_{m-1}\delta&\cdots&\cdots&p_1-q_1\delta&\cdots&0\\
        \vdots&\vdots&\ddots&\ddots&\vdots&\vdots&\ddots&p_1-q_1\delta\\
    \end{bmatrix}.
\end{split}
\end{equation}

Note that every row depends linearly on $\delta$. We use the multi-linearity of determinant of the $(2m-1) \times (2m-1)$ Sylvester's matrix. We observe that it is a polynomial in $\delta$ of degree $2m-1$. Multiplying it by $(-1)^{n(n-1)/2}(-q_m\delta)^{-1} \neq 0$, it becomes a polynomial in $\delta$ with maximum degree of $2m-2$. The proofs of the other cases are very similar.
\end{proof}

Let $\mathcal{Z}(h_\delta(X)) = \mathcal{Z}_m(h_\delta(X))\cup \mathcal{Z}'(h_\delta(X))$. The set $\mathcal{Z}_m(h_\delta(X))$ is the set of zeroes of the leading coefficient of $h_\delta(X)$,  and $\mathcal{Z}'(h_\delta(X)) = \mathcal{Z}(h_\delta(X))\setminus\mathcal{Z}_m(h_\delta(X))$ be the set of $\delta$ which are zeroes of the discriminant but not of the leading coefficient. Using \eqref{eq:a.7} in Definition 8 of the Appendix, we note that $\delta$ can be a zero of the discriminant of $h_\delta(X)$ either by being a zero of the resultant or a zero of the leading coefficient. The latter occurs when $\delta \in \mathcal{Z}_m(h_\delta(X))$.\\

With the help of Lemma 4, we can find the candidates for the risk critical points by solving the hidden equations  in $\delta$. However, as we have seen in the previous sections, not all roots of the hidden equations are guaranteed to be risk critical points. 
For instance, if $\delta \in \mathcal{Z}_m(h_\delta(X))$, we have the following corollary. 

\begin{corollary}
If $\delta_0 \in \mathcal{Z}_m(h_{\delta}(X))$, then $\delta_0$ is a candidate for risk critical point irrespective of the branching order of the root $r_j(\delta_0)$.
\end{corollary}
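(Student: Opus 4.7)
The key observation is that a zero of the leading coefficient of $h_\delta(X)$ represents a qualitatively different source of candidate risk critical points from the repeated-finite-root mechanism of Proposition 2: here one or more roots escape to infinity as $\delta\to\delta_0$, and this escape happens regardless of the local branching structure of the remaining finite roots. The plan is to isolate this escape mechanism and trace its effect on the derivative formula \eqref{eq:3.13}.

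First, I would invoke Lemma 4 to make the leading coefficient $a_n(\delta)$ of $h_\delta(X)$ explicit: it is $-q_m\delta$ in case $m>n$ and $p_n - q_n\delta$ in case $m=n$, so $\mathcal{Z}_m(h_\delta)$ is a single point $\delta_0$ (and in case $m<n$ the set is empty and the claim is vacuous). Then I would apply Vieta's formulas to $h_\delta(X)$: the product of its roots equals, up to sign, $a_0(\delta)/a_n(\delta)$, which diverges as $\delta\to\delta_0$ provided the constant term $a_0(\delta_0)$ is nonzero; coprimeness of $p$ and $q$ rules out the degenerate situation in which enough coefficients would vanish simultaneously to prevent \emph{any} root from escaping. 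Consequently at least one root $r_j(\delta)$ satisfies $|r_j(\delta)|\to\infty$ as $\delta\to\delta_0$, so it does not correspond to any finite zero of $h_{\delta_0}(X)$, and Theorem 3 in the Appendix yields no Puiseux expansion for it centred at $\delta_0$.

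Finally, when this escaping root is substituted into \eqref{eq:3.13}, the term $f(r_j(\delta))\,\frac{d}{d\delta}r_j(\delta)$ has no defined value at $\delta=\delta_0$ because its argument $r_j(\delta_0)$ is not a finite real number. The closed-form expression for $R'(\delta_0)$ derived from the root decomposition of Sections 3--4 is therefore undefined at $\delta_0$, which by Definition 2(2) qualifies $\delta_0$ as a candidate for risk critical point. The entire argument appeals only to the vanishing of $a_n$ at $\delta_0$ and never to the branching order of any finite root $r_j(\delta_0)$, which is precisely the content of \emph{irrespective of the branching order}. The main subtlety is conceptual rather than computational: one must recognise that Proposition 2 and its branching-order refinement simply do not see the $\mathcal{Z}_m$-mechanism, so an independent argument of the kind sketched above is needed to classify those points as candidates.
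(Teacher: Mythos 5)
Your proof takes a genuinely different route from the paper, and while the underlying intuition (a root escapes to infinity because the degree drops) is correct, there are two issues worth flagging. First, your Vieta argument is not airtight: you claim the product of roots $a_0(\delta)/a_n(\delta)$ diverges provided $a_0(\delta_0) \neq 0$ and invoke coprimeness to guarantee this, but coprimeness of $p$ and $q$ does not prevent $a_0(\delta_0) = p_0 - \delta_0 q_0$ from vanishing at the particular $\delta_0$ where $a_n(\delta_0) = 0$ (e.g., for $m > n$ one has $\delta_0 = 0$, and $a_0(0) = p_0$ can be zero even when $p, q$ are coprime, as with $p(X) = X$). Coprimeness only rules out $h_{\delta_0} \equiv 0$. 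The cleaner fact, which does not need Vieta at all, is that $h_{\delta_0}$ has degree at most $n-1$ while $h_\delta$ has degree $n$ for nearby $\delta$, so one root must leave every compact set. Second, your assertion that ``Theorem 3 in the Appendix yields no Puiseux expansion for it centred at $\delta_0$'' is incorrect: Theorem 3(c) is precisely the case of a vanishing leading coefficient and it \emph{does} give an expansion, a Laurent--Puiseux series of the form $\sum_{k=-k_0}^\infty c_k(\delta-\delta_0)^{k/m'}$. The paper's proof uses exactly this: since the leading coefficient is linear in $\delta$, the multiplicity of its zero is $1$, the expansion starts at $k=-1$, and differentiating the leading term $c_{-1}(\delta-\delta_0)^{-1/m'}$ gives $\frac{-c_{-1}}{m'}(\delta-\delta_0)^{-1/m'-1}$, which diverges for \emph{every} $m'>0$ --- that is the ``irrespective of branching order'' conclusion in closed form. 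Your approach buys conceptual clarity about why $\mathcal{Z}_m$ is a separate mechanism from repeated roots, but the paper's approach is more rigorous and delivers the divergence rate explicitly via the same Appendix machinery used everywhere else in the paper.
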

\begin{proof}
Using Theorem 3(c) in the Appendix, if $\delta_0 \in \mathcal{Z}_m(h_{\delta}(X))$, then $\delta_0$ is zero of the leading coefficient of $h_{\delta_0}(X)$ with multiplicity 1 because the leading coefficient of $h_{\delta_0}(X)$ is linear in $\delta_0$. The root $r_j(\delta)$ has a Laurent-Puiseux series representation 
\begin{equation}\label{eq:4.2}
r_j(\delta) = \sum_{k=-1}^{\infty} c_k(\delta-\delta_0)^{k/m'},
\end{equation}
where $m'>0$ is the order of the branching point. If we take the derivative of the first term of $r_j(\delta)$, we have 
$\frac{-c_{-1}}{m'}(\delta-\delta_0)^{\frac{-1}{m'}-1}$, this will diverge for any $m'>0$ as $\delta \to \delta_0$.
\end{proof}

$\textbf{Remark 3:}$ In this section the perturbed polynomial was of the form $h_\delta(X) = p_0(\delta)+p_1(\delta)X+\ldots+p_n(\delta)X^n.$ In this case there are two hidden equations associated with characterization of risk critical points. The first, as before, is \eqref{eq:dis} and the second where the leading coefficient is 0, namely \begin{equation}\label{eq:dis2}\tag{$**$}
    p_n(\delta) = 0.
\end{equation}



\section{Illustration via Simulations}
In this section, we present examples demonstrating some of the key results derived earlier. In particular, we show the importance of the connection between the distribution of the underlying random variable and the location of the roots of the perturbed polynomials. We are going to show three numerical examples of risk critical points. The probabilities of the interval events $I_j(\delta)$ were calculated using Monte Carlo method and roots of the perturbed polynomials were derived manually.\\ 

\textbf{Example 2:} Let $p_\delta(X) = X^2(X-2)-\delta$. Here the hidden equation is $\text{Dis}(p_\delta(X)) = -\delta(27\delta+32) =0$, and we have two candidates for the risk critical points, $\delta=0$ and $\delta = -\frac{32}{27}.$ When $\delta=0$, $X=0$ is a root with even multiplicity and when $\delta = -\frac{32}{27}$, $X = 4/3$ is a root with even multiplicity. Hence we simulated the threshold risk using random variables $X\sim \mathbb{N}(0,1^2)$ and $X \sim \mathbb{N}(4/3,1^2)$ respectively. 
\newpage

\begin{figure}[ht!]
    \centering
    \subfloat[$X\sim \mathbb{N}(0,1^2)$]{{\includegraphics[width=5cm]{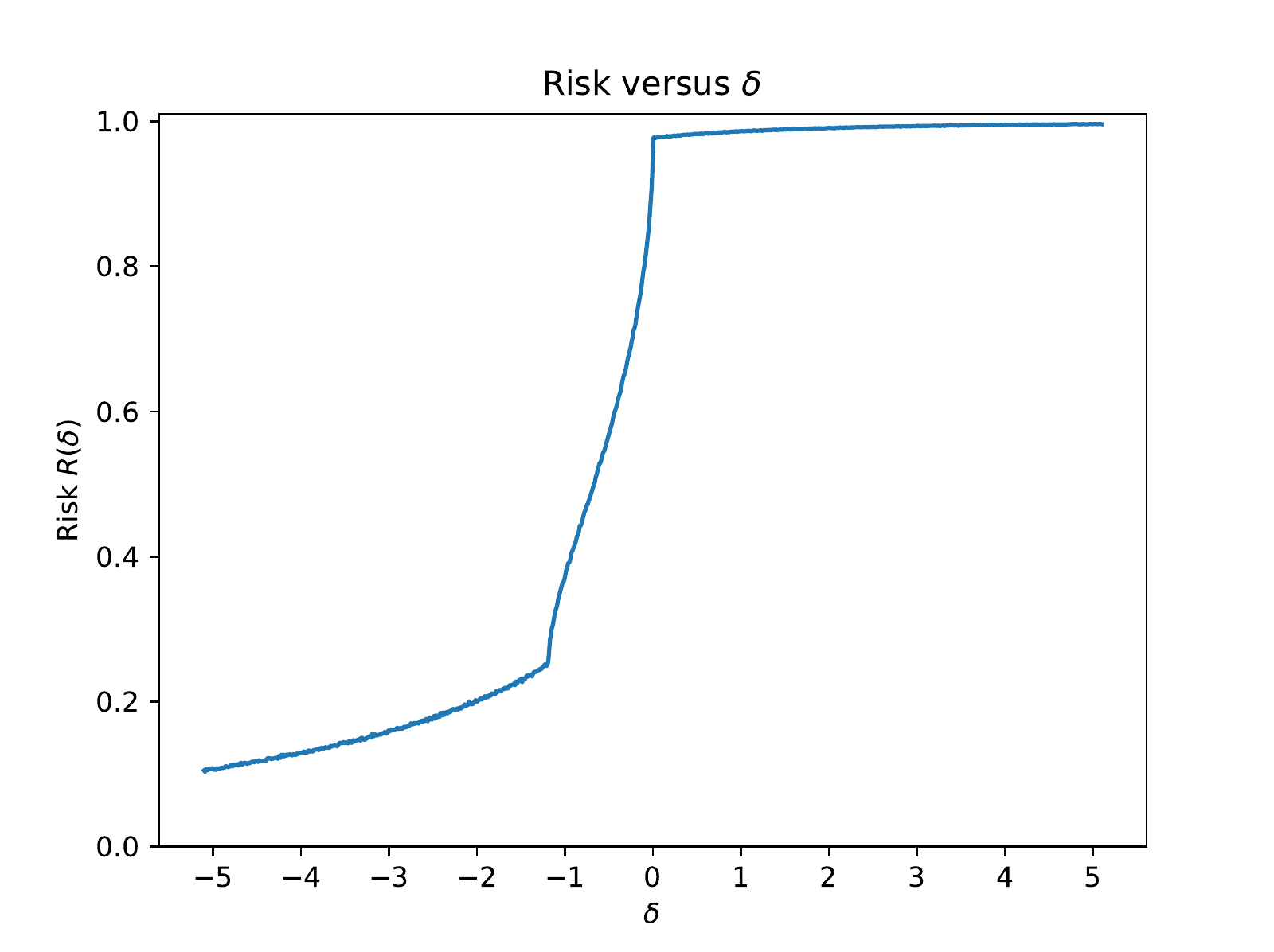} }}
    \subfloat[$X \sim \mathbb{N}(4/3,1^2)$]{{\includegraphics[width=5cm]{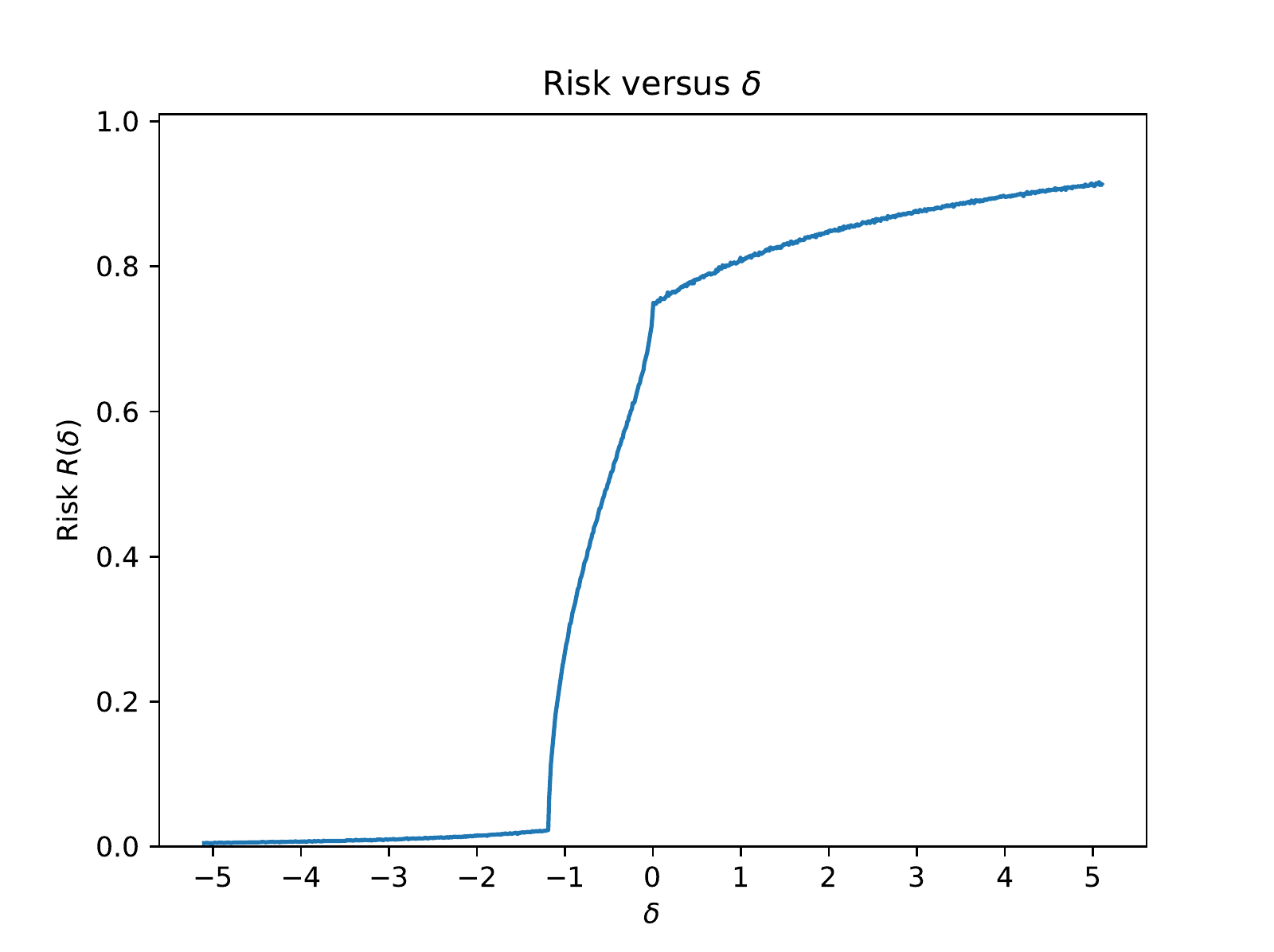} }}
    \caption{Risk versus $\delta$ for $p_\delta(X) = X^2(X-2)-\delta$}%
    \label{fig:case1}%
\end{figure}
Figure 1 demonstrates high sensitivity of  $R(\delta)$ in the neighbourhoods of $\delta = 0 $ and $\delta = -\frac{32}{27}.$\\

\textbf{Example 3:} Let $p_\delta(X) = X^2-\delta X$. Here the hidden equation is $\text{Dis}(p_\delta(X)) = \delta^2 =0$, we have one candidate for a risk critical point, $\delta=0.$  When $\delta=0$,  $X=0$ is a root of $p_\delta(X)$. Hence we simulated the risk using random variable $X\sim \mathbb{N}(0,1^2)$. \\

\begin{figure}[ht!]%
    \centering
    \includegraphics[width=7cm]{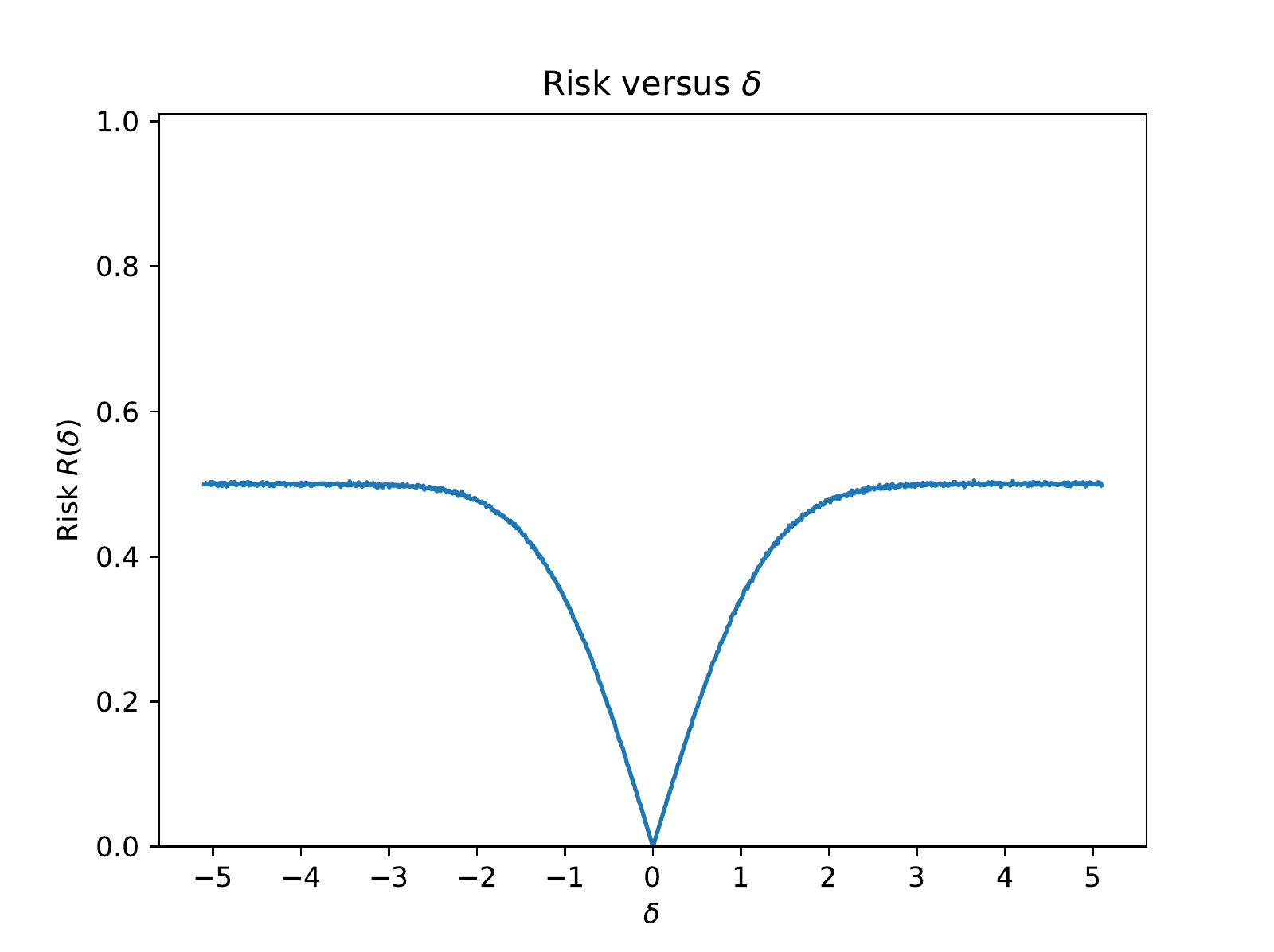}
        \caption{Risk versus $\delta$ for $p_\delta(X) = X^2-\delta X$}%
    \label{fig:case2}
\end{figure}

Figure 2 once again shows the sensitivity of threshold risk in the neighbourhood of $\delta =0$. However, note that when $\delta$ increases from $0$, by the symmetry of normal distribution, the threshold risk increases rapidly from $0$ towards $0.5$. Similarly, for $\delta$ decreasing from $0$.

\textbf{Example 4:} Let $p_\delta(X) = X-\delta X^2$. Here the $\text{Dis}(p_\delta(X)) = 1$, hence the hidden equation \eqref{eq:dis} has no roots. In accordance with Corollary 2, the remaining hidden equation \eqref{eq:dis2} corresponds to the leading coefficient becoming 0. In this case, we have only one candidate for the risk critical point, $\delta =0$. However, note that as $\delta$ approaches $0$ from above, the non-zero root of $p_\delta(X)=0$ approaches $\infty$. Hence, the sensitivity of the threshold risk only manifests itself for distributions with sufficiently heavy tails. Hence we simulate the risk using random variable $X\sim \text{Cauchy}(x_0=0,\gamma= 1)$.
\begin{figure}[ht]
    \centering
    \includegraphics[width=6cm]{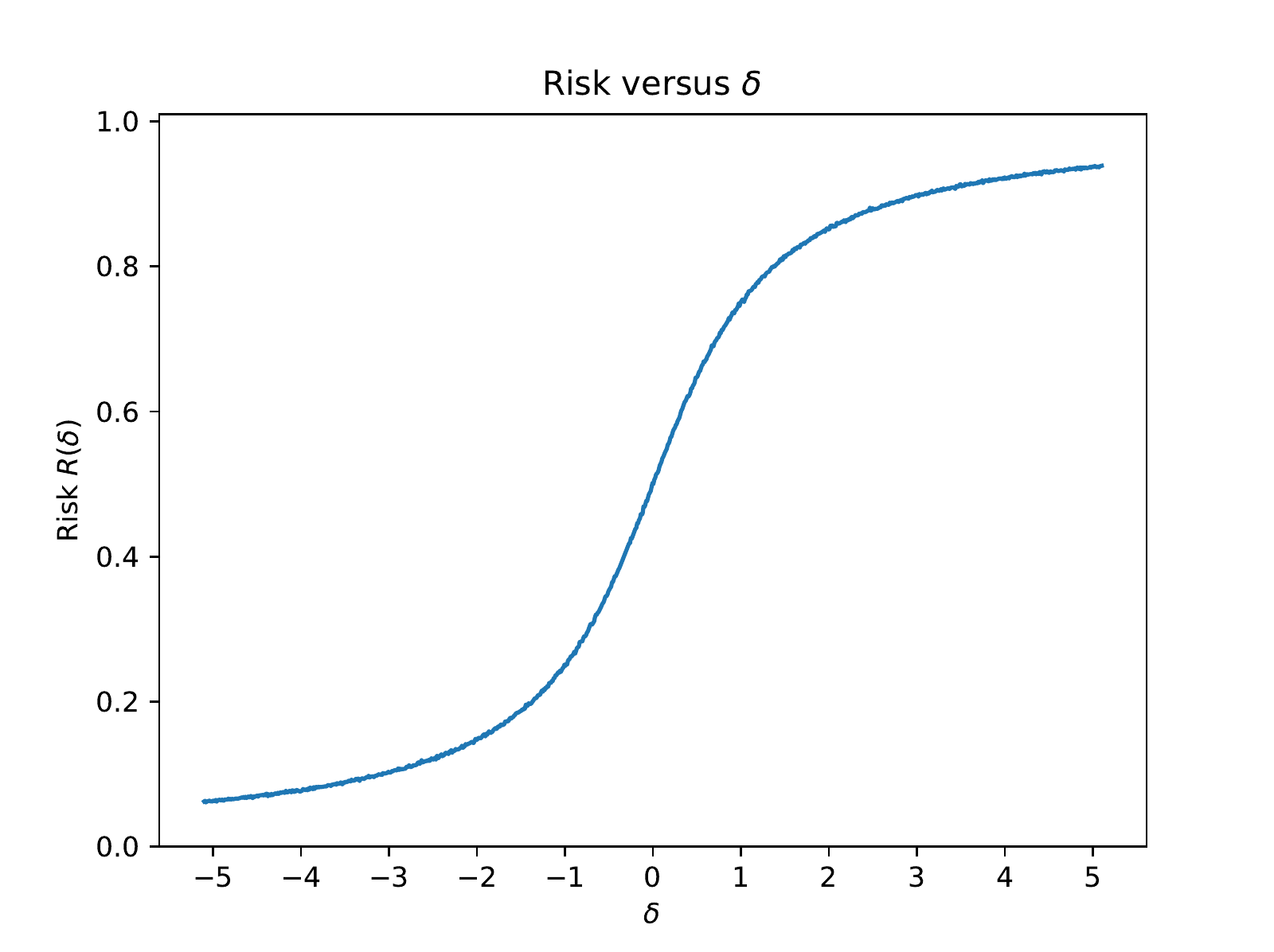}
        \caption{Risk versus $\delta$ for $p_\delta(X) = X-\delta X^2$}%
    \label{fig:case3}
\end{figure}\\
Figure 3 exhibits the sensitivity of threshold risk in the neighbourhood of the risk critical point $\delta=0$.

\newpage
\appendix
\section{}
For the sake of completeness, we recall a number of important relationships involving polynomials, resultants and the discriminant. While proofs of some of these results can be found in many sources we cite mainly the widely used reference \cite{gelfand}. We also cite \cite{avrachenkov_analytic_2013} because the latter contains the proof of Theorem 3 that is not easily found elsewhere.

\begin{definition}
For real polynomials $f(x) = a_0 + a_1x +\ldots + a_nx^n$ and $g(x
) = b_0+b_1x+\ldots + b_m x^m$, with $\text{deg}(f) =n, \text{deg}(g) = m$, their resultant $\text{Res}(f,g)$ is the determinant of the $(m+n) \times (m+n) $ Sylvester matrix, given by \begin{equation}\label{eq:a.1}
    \text{Res}(f,g) = \text{Det}\begin{bmatrix}a_n&a_{n-1}&\cdots&\cdots&a_0&0&\cdots&0\\
    0&a_{n}&\cdots&\cdots&a_1&a_0&\cdots&0\\
    \vdots&\vdots&\ddots&\ddots&\vdots&\vdots&\ddots&a_0\\
    b_m&b_{m-1}&\cdots&\cdots&b_0&0&\cdots&0\\
    0&b_{m}&b_{m-1}&\cdots&\cdots&b_0&\cdots&0\\
        \vdots&\vdots&\ddots&\ddots&\vdots&\vdots&\ddots&b_0\\
    \end{bmatrix}.
\end{equation}
\end{definition}

\begin{theorem}
For real polynomials $f(x) = a_0 + a_1x +\ldots + a_nx^n$ and $g(x
) = b_0+b_1x+\ldots + b_m x^m$, suppose that $f $ has roots $\alpha_1,\ldots,\alpha_n$ and $g$ has roots $\beta_1,\ldots,\beta_m$ (not necessarily distinct). Then the resultant can be computed as 
\begin{equation}\label{eq:a.2}
    \text{Res}(f,g) = a_{n}^m b_{m}^n \prod_{i=1}^{n} \prod_{j=1}^{m} (\alpha_i - \beta_j). 
\end{equation}
\end{theorem}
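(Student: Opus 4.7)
The plan is to prove this classical identity by exhibiting a single matrix product that realizes both sides of the claimed equality, and then comparing determinants. The key observation is that the Sylvester matrix $S$ defined in \eqref{eq:a.1} admits a very clean action on Vandermonde-type column vectors, and this action exposes the product of root differences directly.

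First I would note that if $\mathbf{x}(t) = (t^{m+n-1}, t^{m+n-2}, \ldots, t, 1)^T$, then the entries of $S \, \mathbf{x}(t)$ are precisely $t^{m-1}f(t), t^{m-2}f(t), \ldots, f(t), t^{n-1}g(t), \ldots, g(t)$. Next, I would build the $(m+n)\times(m+n)$ matrix $V$ whose $j$-th column is $\mathbf{x}(\gamma_j)$, where $(\gamma_1,\ldots,\gamma_{m+n}) = (\alpha_1,\ldots,\alpha_n,\beta_1,\ldots,\beta_m)$. Then $S V$ has a block anti-diagonal structure: in each of the first $n$ columns (those indexed by the $\alpha_i$), the top $m$ entries vanish because $f(\alpha_i)=0$; in each of the last $m$ columns (those indexed by the $\beta_j$), the bottom $n$ entries vanish because $g(\beta_j)=0$. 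Hence
\[
S V \;=\; \begin{pmatrix} 0 & B \\ A & 0 \end{pmatrix},
\]
where $A_{ij} = \alpha_j^{n-i} g(\alpha_j)$ (an $n\times n$ block) and $B_{ij} = \beta_j^{m-i} f(\beta_j)$ (an $m\times m$ block).

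Now I would compute $\det(SV)$ in two ways. On one hand, $\det(SV) = \det(S)\det(V) = \mathrm{Res}(f,g)\cdot W$, where $W = \det(V)$ is the (reversed-row) Vandermonde determinant built on $\gamma$. On the other hand, swapping the two blocks costs a sign $(-1)^{mn}$, and the two blocks factor via $\det(A) = \bigl(\prod_{i=1}^n g(\alpha_i)\bigr) \det(\alpha_j^{n-i})$ and $\det(B) = \bigl(\prod_{j=1}^m f(\beta_j)\bigr) \det(\beta_j^{m-i})$. Applying the standard Vandermonde identity
\[
\prod_{1 \le i < j \le m+n}(\gamma_j - \gamma_i) \;=\; \Bigl[\prod_{i<j}(\alpha_j-\alpha_i)\Bigr]\Bigl[\prod_{i<j}(\beta_j-\beta_i)\Bigr]\prod_{i,j}(\beta_j - \alpha_i),
\]
I can then cancel the two smaller Vandermonde factors (which appear on both sides of the equation) and substitute $g(\alpha_i) = b_m \prod_j(\alpha_i - \beta_j)$ together with $f(\beta_j) = a_n \prod_i(\beta_j - \alpha_i)$. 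After this cancellation the factor $\prod_{i,j}(\beta_j - \alpha_i)$ appears twice on the right and once on the left, so one copy survives and matches, giving $\mathrm{Res}(f,g) = a_n^m b_m^n \prod_{i,j}(\alpha_i - \beta_j)$ (after absorbing the sign $(-1)^{mn}$ obtained by turning each $\beta_j-\alpha_i$ into $\alpha_i-\beta_j$).

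The main obstacle I anticipate is the sign bookkeeping. Three separate sources of signs must be reconciled: the $(-1)^{mn}$ from the block-swap in the anti-diagonal determinant, the $(-1)^{k(k-1)/2}$ factors coming from the reversed row-ordering of each Vandermonde determinant (of sizes $n$, $m$, and $m+n$), and the $(-1)^{mn}$ that arises when converting $\prod(\beta_j-\alpha_i)$ into $\prod(\alpha_i-\beta_j)$. Once one verifies that these cancel in pairs, the identity falls out cleanly; the substantive content is entirely in the block-anti-diagonal factorization of $SV$.
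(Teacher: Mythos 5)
Your argument is the classical Sylvester--Vandermonde factorization proof of the product formula; the paper itself gives no proof for this theorem, only a pointer to Gelfand--Kapranov--Zelevinsky, so you are supplying what the paper omits. The essential computation is correct: $S\,\mathbf{x}(t)$ does have entries $t^{m-1}f(t),\ldots,f(t),t^{n-1}g(t),\ldots,g(t)$, and the block anti-diagonal structure of $SV$ follows. The sign bookkeeping you were worried about does close up, because
$\tfrac{(m+n)(m+n-1)}{2}=\tfrac{m(m-1)}{2}+\tfrac{n(n-1)}{2}+mn$,
so the $(-1)^{(m+n)(m+n-1)/2}$ from the reversed-row Vandermonde $\det V$ matches exactly the product of $(-1)^{mn}$ from the block swap and $(-1)^{m(m-1)/2}(-1)^{n(n-1)/2}$ from the two reversed-row Vandermondes hidden in $\det A$ and $\det B$; the leftover $(-1)^{mn}$ on the right is then exactly absorbed when you rewrite $\prod_i g(\alpha_i)=b_m^n\prod_{i,j}(\alpha_i-\beta_j)=(-1)^{mn}b_m^n\prod_{i,j}(\beta_j-\alpha_i)$.

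There is one genuine, if routine, gap. Your derivation divides by $\det V$ and then cancels a copy of $\prod_{i,j}(\beta_j-\alpha_i)$ from both sides, which requires that $\alpha_1,\ldots,\alpha_n,\beta_1,\ldots,\beta_m$ be pairwise distinct. The theorem, however, explicitly allows repeated roots and does not exclude common roots. You should close this by observing that after substituting Vieta's relations for $a_{n-k}/a_n$ and $b_{m-k}/b_m$, both sides of \eqref{eq:a.2} are polynomials in $a_n,b_m,\alpha_1,\ldots,\alpha_n,\beta_1,\ldots,\beta_m$; having established the identity on the Zariski-dense open set where the $\gamma_j$ are pairwise distinct, it therefore holds identically. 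Alternatively, a limiting argument over $\mathbb{R}$ or $\mathbb{C}$ works. Either way this step should be stated, since without it the proof only covers the generic case the statement was specifically worded to go beyond.
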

\begin{proof}
See reference \cite{gelfand} page 408.
\end{proof}
\begin{lemma}
For real polynomials $f(x) = a_0 + a_1x +\ldots + a_nx^n$ and $g(x
) = b_0+b_1x+\ldots + b_m x^m$, where $m\leq n$, suppose that $f $ has roots $\alpha_1,\ldots,\alpha_n$ and $g$ has roots $\beta_1,\ldots,\beta_m$ (not necessarily distinct). Then the resultant can be computed as
\begin{equation}\label{eq:a.3}
    \text{Res}(f,g) = a_n^m \prod_{i=1}^{n}g(\alpha_i),
\end{equation}
where $g(x) = b_m\prod_{j=1}^{m}(x-\beta_j)$, and $g(\alpha_i) = b_m\prod_{j=1}^{m}(\alpha_i-\beta_j)$.
\end{lemma}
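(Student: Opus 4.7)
The plan is to derive Lemma 5 as an immediate re-packaging of Theorem 3 (the product formula for the resultant). Starting from
\[
\text{Res}(f,g) = a_n^{m} b_m^{n} \prod_{i=1}^{n}\prod_{j=1}^{m}(\alpha_i - \beta_j),
\]
I would group the inner product over $j$ with one factor of $b_m$ for each $i$. Since $g$ splits (over $\mathbb{C}$) as $g(x) = b_m \prod_{j=1}^{m}(x-\beta_j)$ by the fundamental theorem of algebra applied to the polynomial of degree exactly $m$, substituting $x = \alpha_i$ yields $g(\alpha_i) = b_m \prod_{j=1}^{m}(\alpha_i - \beta_j)$. Doing this for every $i=1,\dots,n$ accounts for all $n$ copies of $b_m$, which is precisely the factor $b_m^{n}$ appearing in Theorem 3.

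Concretely, I would write
\[
a_n^{m} b_m^{n} \prod_{i=1}^{n}\prod_{j=1}^{m}(\alpha_i - \beta_j)
= a_n^{m} \prod_{i=1}^{n}\Bigl(b_m \prod_{j=1}^{m}(\alpha_i - \beta_j)\Bigr)
= a_n^{m}\prod_{i=1}^{n} g(\alpha_i),
\]
which is exactly the asserted identity. The hypothesis $m \le n$ plays no role in this algebraic identity itself; it is simply the regime in which the formula is stated (and the regime used in applications such as the proof of Lemma 4, where $g = h_\delta'$ has degree at most that of $h_\delta$).

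There is essentially no obstacle: the only thing one must be careful about is that the factorization $g(x) = b_m\prod_{j=1}^{m}(x-\beta_j)$ is valid because $\deg g = m$ and the $\beta_j$ are listed with multiplicity, so the leading coefficient $b_m$ is correctly matched. Once that is noted, the rest is a one-line rearrangement of products, and no further machinery beyond Theorem 3 is required.
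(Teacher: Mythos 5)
Your proof is correct and takes exactly the route the paper does: the paper's own proof is the one-line remark ``Follows immediately from Theorem 2,'' and your argument simply makes explicit the regrouping of $b_m^n$ into $n$ factors of $b_m$ so that each inner product becomes $g(\alpha_i)$. (Note only the minor label slip: the product formula $\text{Res}(f,g)=a_n^m b_m^n\prod_i\prod_j(\alpha_i-\beta_j)$ is Theorem~2 in the paper, not Theorem~3.)
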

\begin{proof}
Follows immediately from Theorem 2.
\end{proof}

\begin{definition}
Let $f(x)=a_0 + a_1 x + a_2 x^2 +\ldots + a_n x^n$ be a real polynomial, the discriminant of $f$ is 
\begin{equation}\label{eq:a.4}
    \text{Dis}(f) = a_n^{2n-2} \prod_{1\leq i \leq j \leq n } (\alpha_i - \alpha_j)^2.
\end{equation}
where $\alpha_1,\ldots,\alpha_n$ are the roots of $f$ (not necessarily distinct).  
\end{definition}

\begin{lemma}
Let $f = a_0 + a_1x + a_2x^2+\ldots+a_nx^n$, the discriminant of $f$ is given by 
\begin{equation}\label{eq:a.5}
    \text{Dis}(f) = (-1)^{n(n-1)/2}a_{n}^{-1}\text{Res}(f,f').
\end{equation}
\end{lemma}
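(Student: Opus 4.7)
The plan is to evaluate $\mathrm{Res}(f,f')$ via Lemma 3 and compare the result term-by-term with the definition \eqref{eq:a.4} of $\mathrm{Dis}(f)$. Since $\deg(f') = n-1 < n$, the hypothesis of Lemma 3 applies with $g = f'$, yielding $\mathrm{Res}(f,f') = a_n^{n-1}\prod_{i=1}^{n} f'(\alpha_i)$, where $\alpha_1,\dots,\alpha_n$ are the roots of $f$ (with multiplicity). The main task then reduces to rewriting $\prod_i f'(\alpha_i)$ in a form that exhibits $\prod_{i<j}(\alpha_i - \alpha_j)^2$.

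To carry this out, I would first factor $f(x) = a_n\prod_{j=1}^n(x-\alpha_j)$ and differentiate using the product rule to obtain
\begin{equation*}
f'(\alpha_i) \;=\; a_n\prod_{j\neq i}(\alpha_i - \alpha_j),
\end{equation*}
since every term of the derivative in which the factor $(x-\alpha_i)$ survives vanishes at $x=\alpha_i$. Multiplying over $i$ gives $\prod_i f'(\alpha_i) = a_n^{n}\prod_{i\neq j}(\alpha_i - \alpha_j)$, so that
\begin{equation*}
\mathrm{Res}(f,f') \;=\; a_n^{2n-1}\prod_{i\neq j}(\alpha_i - \alpha_j).
\end{equation*}
Note that if $f$ has a repeated root then $f'(\alpha_i) = 0$ for that index and both sides of the target identity vanish; so we may assume distinct roots and observe that the general case follows by continuity (or directly from the formula, since the factor $a_n^{-1}$ is cancelled by an $a_n$ on the right).

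The remaining step is a sign bookkeeping: the $n(n-1)$ ordered pairs with $i \neq j$ decompose into $n(n-1)/2$ unordered pairs, and for each such pair $\{i,j\}$ the contribution is $(\alpha_i - \alpha_j)(\alpha_j - \alpha_i) = -(\alpha_i - \alpha_j)^2$, giving
\begin{equation*}
\prod_{i\neq j}(\alpha_i - \alpha_j) \;=\; (-1)^{n(n-1)/2}\prod_{i<j}(\alpha_i - \alpha_j)^2.
\end{equation*}
Substituting back and recognizing $a_n^{2n-2}\prod_{i<j}(\alpha_i-\alpha_j)^2 = \mathrm{Dis}(f)$ from \eqref{eq:a.4}, one obtains $\mathrm{Res}(f,f') = (-1)^{n(n-1)/2}\, a_n\, \mathrm{Dis}(f)$, which rearranges to the claimed identity. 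The only subtlety is keeping the three powers of $a_n$ (from Lemma 3, from the $n$ copies of $a_n$ in the $f'(\alpha_i)$, and from the $2n-2$ in the discriminant) straight and confirming that the sign $(-1)^{n(n-1)/2}$ is self-inverse; neither is deep, but they are where a careless slip would land.
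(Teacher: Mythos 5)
Your proof is correct, but it takes a genuinely different route from the paper: the paper disposes of this lemma by citing equation (1.23) on page 404 of \cite{gelfand} and gives no derivation. What you supply is the standard self-contained argument: apply the product-of-values form of the resultant (this is Lemma 5 in the paper's numbering, not Lemma 3) with $g=f'$ to get $\mathrm{Res}(f,f')=a_n^{n-1}\prod_i f'(\alpha_i)$, use the factorization $f(x)=a_n\prod_j(x-\alpha_j)$ to evaluate $f'(\alpha_i)=a_n\prod_{j\neq i}(\alpha_i-\alpha_j)$, collect the powers of $a_n$ to obtain $a_n^{2n-1}\prod_{i\neq j}(\alpha_i-\alpha_j)$, convert the ordered-index product to the unordered one at a cost of $(-1)^{n(n-1)/2}$, and match against Definition 7. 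The bookkeeping you worry about ($a_n^{n-1}\cdot a_n^{n}=a_n^{2n-1}=a_n\cdot a_n^{2n-2}$ and the self-inverse sign) checks out, and the continuity remark correctly handles repeated roots. Compared with the paper's bare citation, your version has the virtue of making visible exactly where the $(-1)^{n(n-1)/2}$ factor and the $a_n^{-1}$ come from, which is useful since this identity is actually invoked later (Proposition 1, Lemma 4); the cost is a few extra lines. The only thing to fix is the cross-reference: the product formula you invoke is Lemma 5 of the Appendix, not Lemma 3.
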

\begin{proof}
Follows from equation (1.23) on Page 404 of \cite{gelfand}.
\end{proof}

\begin{definition} Let
\begin{equation}\label{eq:a.6}
    Q(x,z) = q_n(z)x^n + q_{n-1}(z) x^{n-1} + \ldots+q_0(z)
\end{equation}
$Q(x,z)$ is a bivariate polynomial with the perturbation variable $z$.  Using \eqref{eq:a.4}, the discriminant of $Q(x,z)$ has the following form 
\begin{equation}\label{eq:a.7}
\text{Dis}(Q,z) = q_{n}(z) \prod_{i<j}(\alpha_i(z)-\alpha_j(z))^2,   
\end{equation}
where $\alpha_1(z), \ldots, \alpha_n(z)$ are the roots of $Q(x,z)$. 
\end{definition}
Let $\mathcal{Z}(Q) = \mathcal{Z}_n(Q) \cup \mathcal{Z}'(Q)$ be the zero set of $\text{Dis}(Q,z_0)$. More specifically,  $\mathcal{Z}_n(Q) = \{z | q_n(z) = 0\}$ and $\mathcal{Z}'(Q) = \{z | \text{Dis}(Q,z)=0, q_n(z) \neq 0 \}$. The following theorem provides the algebraic analytic form of the root function $x=x(z)$ in various situations with respect to the nature of the point $z$. We note that, in some cases, the latter is an analytic multi-valued function  $f(z)$ defined in a punctured neighborhood of $z$ satisfying $Q(f(z), z)=0$ for all $z$ in the complement of $\mathcal{Z}(Q)$. In those cases, the type of series expansion that results depends on the limiting properties of $f$ when z approaches $z$, as stated more precisely in the theorem.

\begin{theorem}(Classification of root expansions \cite{avrachenkov_analytic_2013})
\begin{enumerate}
    \item[(a)] If $z_0 \notin \mathcal{Z}(Q)$ and is  not a zero of $q_n(z)$, then in a neighborhood of $z_0$ every one of the $n$ branches of the solution $x(z)$ is holomorphic, and so it has the analytic representation 
    \begin{equation}\label{eq:a.8}
    x(z) = \sum_{k=0}^{\infty} c_k (z-z_0)^k.
    \end{equation}
    \item[(b)] If $z_0 \in \Z'(Q)$, then $z_0$ is a branching point of some order $n' \leq n$ for every branch $f(z)$ of the solution $x(z)$ and also $\lim_{z \rightarrow z_0}f(z) =0$. In this case the solution $x(z)$ has a Puiseux series representation 
    \begin{equation}\label{eq:a.9}
    x(z) = \sum_{k=0}^{\infty} c_k (z-z_0)^{k/n'}.
    \end{equation}
    \item[(c)] If $z_0 \in \Z_n(Q)$ and is a zero of multiplicity $n_0>0$ of $q_n(z)$, then for any branch $f(z)$ of $x(z)$ the point $z_0$ is a branching point of some order $n'\leq n$ and $\lim_{z \rightarrow z_0}(z-z_0)n^{n+\delta}f(z)=0$ for all $\delta>0$. In this situation the solution $x(z)$ has a Laurent-Puiseux series representation 
    \begin{equation}\label{eq:a.10}
        x(z) = \sum_{k=-k_0}^{\infty} c_k (z-z_0)^{k/n'}.
    \end{equation}
    \item[(d)] If $z_0 \notin \Z(Q)$ and is the zero of multiplicity $m_0 >0$ of $q_m(z)$, then $z_0$ is a pole or order $m_0$ for every branch $f(z)$ of the solution $x(z)$, and in this situation the solution $x(z)$ has a Laurent series representation
    \begin{equation}\label{eq:a.11}
        x(z) = \sum_{k=-m_0}^{\infty}c_k(z-z_0)^k.
    \end{equation}
\end{enumerate}
\end{theorem}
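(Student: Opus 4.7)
The plan is to treat the four cases by invoking classical tools from algebraic function theory, distinguished by whether $Q(x,z_0)$ retains full degree $n$ in $x$ and whether its roots at $z_0$ are simple.

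Case (a) would follow from the holomorphic implicit function theorem. The joint hypotheses $z_0 \notin \mathcal{Z}(Q)$ and $q_n(z_0) \neq 0$ ensure that $Q(x,z_0)$ is a polynomial of full degree $n$ whose discriminant does not vanish, so all $n$ roots $\alpha_i(z_0)$ are simple. Equivalently, $\partial_x Q(\alpha_i(z_0), z_0) \neq 0$, and the holomorphic IFT yields a unique holomorphic branch with Taylor expansion \eqref{eq:a.8}.

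Case (b) is the classical Newton-Puiseux theorem, and my plan is a monodromy argument. On the complement of $\mathcal{Z}(Q)$, case (a) provides $n$ locally holomorphic branches. Analytic continuation of a chosen branch along a small loop around $z_0$ returns a possibly different branch, inducing a permutation of the $n$ roots; I would decompose it into disjoint cycles, and let $n' \leq n$ be the length of the cycle containing the chosen branch. The substitution $z - z_0 = w^{n'}$ then uniformizes: the selected cycle becomes a single-valued holomorphic function of $w$ on a punctured disk around $w=0$. To conclude that it extends holomorphically across $w = 0$, I would bound the branch near $z_0$ using $q_n(z_0) \neq 0$, which forces all roots of $Q(x,z)$ to remain in a bounded region. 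A Taylor expansion in $w$ then yields \eqref{eq:a.9} upon resubstituting $w = (z-z_0)^{1/n'}$.

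Cases (c) and (d) both involve a zero of the leading coefficient, hence roots tending to infinity. The plan is the reciprocal substitution $y = 1/x$, turning $y^n Q(1/y, z) = 0$ into a companion polynomial
\begin{equation*}
\widetilde{Q}(y,z) = q_0(z)\,y^n + q_1(z)\,y^{n-1} + \ldots + q_n(z),
\end{equation*}
whose roots near $y = 0$ correspond to roots of $Q$ near $\infty$. At $z_0$, $\widetilde{Q}(y, z_0)$ has $y=0$ as a root of multiplicity equal to the vanishing order of $q_n$. In case (d), the nonvanishing of $\mathrm{Dis}(Q)$ translates to absence of branching for $\widetilde{Q}$ near $(0,z_0)$, so case (a) applied to $\widetilde{Q}$ produces $y(z) = c_{m_0}(z-z_0)^{m_0} + \ldots$, whose reciprocal gives the Laurent series \eqref{eq:a.11}. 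In case (c), branching may co-occur with the degree drop; applying case (b) to $\widetilde{Q}$ yields a Puiseux series in $y$, and reciprocating produces the Laurent-Puiseux form \eqref{eq:a.10}, with the negative-power truncation $k_0$ dictated by the multiplicity of the zero of $q_n$ and the branching order $n'$.

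The hardest step is the Puiseux construction in case (b): rigorously justifying that the monodromy permutation is well-defined (requires covering-space arguments on the complement of $\mathcal{Z}(Q)$), controlling the branches so they stay bounded as $z \to z_0$, and establishing that the uniformized function has a removable rather than essential singularity at $w = 0$. This is precisely the content of Newton-Puiseux and is typically handled either algebraically via Newton polygons (which also pin down the starting exponent of the expansion) or analytically via Weierstrass preparation combined with monodromy. Once case (b) is in hand, cases (c) and (d) reduce to it or to (a) via the reciprocal substitution, and case (a) itself is immediate from the IFT.
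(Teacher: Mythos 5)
The paper does not actually prove this theorem: its entire ``proof'' is a citation to Theorem 4.8 of \cite{avrachenkov_analytic_2013}. Your proposal, by contrast, reconstructs the classical argument from scratch, and the outline is sound: the implicit function theorem for (a), the standard monodromy uniformization $z-z_0=w^{n'}$ combined with boundedness (forced by $q_n(z_0)\neq 0$) to get a removable singularity for (b), and the reciprocal substitution $y=1/x$, $\widetilde{Q}(y,z)=y^nQ(1/y,z)$, to push roots at infinity back to the origin for (c) and (d). That is essentially how Newton--Puiseux and its Laurent--Puiseux variants are established in the literature, so you are in good company; you have simply chosen to prove what the paper delegates.

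Two cautions. First, in case (c) you should be explicit that the root of $\widetilde{Q}(\cdot,z_0)$ at $y=0$ can have multiplicity higher than $1$ even if $q_n$ vanishes to order $1$ in $z$ (it depends on how many trailing coefficients $q_n,q_{n-1},\dots$ vanish at $z_0$), so you are genuinely applying the Newton--Puiseux machinery of (b) to $\widetilde{Q}$ at a multiple root, not a trivial reduction. Second, your reading of case (d) as ``leading coefficient vanishes but discriminant does not'' does not match the hypothesis as printed, since $z_0\notin\Z(Q)$ already forces $q_n(z_0)\neq 0$ (because $\Z_n(Q)\subseteq\Z(Q)$), and the symbol $q_m(z)$ is not defined for the given $Q$. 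This appears to be a transcription artifact in the theorem statement rather than a flaw in your reasoning, but you should flag the tension rather than silently resolve it in your favor.
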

\begin{proof}
See Theorem 4.8 on Page 93 \cite{avrachenkov_analytic_2013}. 
\end{proof}





\bibliography{ref} 
\bibliographystyle{chicago}

\end{document}